\newtheorem{theorem}{Theorem}
\newtheorem{lemma}{Lemma}
\newtheorem{definition}{Definition}
\newtheorem{remark}{Remark}
\newtheorem{proposition}{Proposition}
\newcommand{\leqnomode}{\tagsleft@true}
\newcommand{\reqnomode}{\tagsleft@false}
\newcommand{\Id}{\mathrm{I}}
\begin{document}

\title{Hydrodynamic Limit for a Fokker-Planck Equation with Coefficients in Sobolev Spaces.}   
\author{Ioannis Markou}         
\date{June 2016}    
\maketitle

\begin{abstract}
  In this paper we study the hydrodynamic (small mass approximation) limit of a Fokker-Planck equation. This
  equation arises in the kinetic description of the evolution of a particle system immersed in a
  viscous Stokes flow. We discuss two different methods of hydrodynamic convergence. The first method
  works with initial data in a weighted $L^{2}$ space and uses weak convergence and the extraction of convergent
  subsequences. The second uses entropic initial data and gives an $L^{1}$ convergence to the solution of
  the limit problem via the study of the relative entropy.
\end{abstract}

\textbf{Keywords}: Hydrodynamic limit, Fokker-Planck equation, weak compactness, relative entropy.

\textbf{2000 MR Subject Classification}: 35Q35, 35Q84.

\section{Introduction}

\subsection{Introduction to the problem}

We study the hydrodynamic limit for a Fokker-Planck equation that
arises in the modeling of a system of large particles
immersed in a much larger number of micromolecules. Examples of such
particle systems include dilute solutions of polymers that arise often in industrial settings
\cite{BACH,Do,DoEd,JaCl,Ki}. Typically, macromolecules (or more precisely, the monomer parts
they are comprised of) are modeled by ideal spheres
whose interactions are mediated by interactions with the micromolecules. We model the micromolecules as
an incompressible fluid governed by Stokes flow. The interactions of these idealized particles with the fluid are
modeled by admissible boundary conditions, Brownian noise and the introduction of damping.

The dynamics of particle motion is described by a phase-space vector $(x,v)\in
\mathbb{R}^{n}_{x}\times \mathbb{R}^{n}_{v}$.
If the statistics of the particle motion is described
by the probability density $f(t,x,v)\geq 0$, then
the evolution of $f$  is governed by the
Fokker-Planck equation
\begin{equation} \label{F-P*} \partial_{t}f +v \cdot \nabla_{x}f+\frac{1}{m}
\nabla_{v}\cdot (F f)=\frac{1}{m^{2}}
\nabla_{v}\cdot (G(x)\nabla_{v}f) ,\end{equation}
with the particle mass represented by $m$. The force $F(t,x,v)$ in
our model is chosen so that $F(t,x,v)=-\nabla V(x)-G(x)v$, where $V(x)$ is a
potential that depends on the particles' configuration and $G(x)v$ is the damping (hydrodynamic) force term.
The potential $V(x)$ captures all interactions between beads that are not mediated by the fluid. This allows,
particularly, for the incorporation in the model of any type of spring forces between beads.

The Fokker-Planck equation \ref{F-P*} is naturally
associated to the phase-space stochastic differential system
\begin{align*} \left\{ \begin{array}{ll}  &\dot{x}(t)=v , \\
&\dot{v}(t)=-\frac{1}{m}(G(x)v+\nabla V(x))
+\frac{\sqrt{2}}{m}G^{1/2}(x)\dot{W}(t),  \end{array} \right. \end{align*}
where $W(t)$ is the centered Gaussian vector in $\mathbb{R}^{n}$ with covariance
$\mathbb{E}( \dot{W}(t)\dot{W}(t')) \break =\Id \delta(t-t')$.
Here $\mathbb{E}$ stands for expectation with respect to Gaussian measure. The inclusion of the friction matrix $G(x)$
in the Brownian forcing term is an instance of the fluctuation-dissipation theorem which asserts
that fluctuations caused by white noise and the response to small perturbations applied to the system are in balance.
This is evident by the Einstein-Smoluchowski relation \cite{Ein,Sm} that states that the diffusion tensor (related to
thermal motion) is proportional to friction $G(x)$.

Equation \ref{F-P*} is very important in the description of
polymer models when inertial effects are involved. This
is reminiscent of the inertial kinetic models in the
work by P.Degond and H.Liu \cite{DeLi}. Therein, the authors introduce
novel kinetic models for Dumbell-like and rigid-rod polymers in the presence of
inertial forces and show formally that when inertial effects vanish the limit is
consistent with well accepted macroscopic models in polymer rheology. A direct quote from \cite{DeLi}
reasons on the importance of kinetic models involving inertial effects in describing polymer sedimentation:
``In current kinetic theory models for polymers, the
inertia of molecules is often neglected. However, neglect of inertia in some cases
leads to incorrect predictions of the behavior of polymers.
The forgoing considerations indicate that the inertial effects are of
importance in practical applications, e.g., for short time characteristics
of materials based on the relevant underlying
phenomena''.

One of the differences with the theory in the Degond \& Liu work is that we
take into account hydrodynamic interactions between $N$ particles
with the use of the symmetric, non negative $3N \times 3N$
friction tensor $G(x)$, that contains all the information
for these interactions. These hydrodynamic
interactions are the result of a particle's motion that
perturbs the fluid and has an effect on other particles' movement. The constant friction case
$G(x)=\gamma \Id$ (for $\gamma >0$) is interesting in its own
right as it corresponds to particles that ``sink
freely'' without any hydrodynamic type of interaction
between them. In this trivial case, there is no account of hydrodynamic effects
and the parabolic limit is derived with no difficulty as we show.
In a similar spirit as in \cite{DeLi},
our goal is to show rigorously that
equation \ref{F-P*} leads to the derivation of a well accepted Smoluchowski type
of equation when inertial effects are ignored (see Theorem 1).

Before we proceed with the details of the limiting
approximation, we should note the difficulties
in computing the exact formula for friction $G(x)$ (or most
commonly the mobility $\mu=G^{-1}(x)$) for every $N$
particle configuration. In practice, this would involve
solving a linear Stokes system with very complicated
boundary conditions, i.e. the $N$ particles' surface. A particular modelling
problem is the appropriate
way to compute these interactions for overlapping particles and particles that are almost touching. More
specific, for particles that are very close, integrable singularities
of the friction tensor are possible (lubrication effects).
Below we give the two most important approximations
of the mobility tensor used in simulations.

The first non-trivial approximation to mobility
is the Oseen tensor that corresponds to Green's kernel
solution of a Stokes problem for point particles \cite{DoEd,KiKa}. For $N$ particles with
centers $\{x_{i}\}_{i=1}^{N}$, radius $a$, in
a fluid with viscosity $\eta$, the Oseen tensor
$\mu^{OS}=[\mu^{OS}_{ij}]_{i,j=1}^{N}$ is a $3N \times 3N$ tensor with $3 \times 3$ blocks
\begin{equation*} \mu_{ij}^{OS}=\left \{ \begin{array}{ll} \frac{1}{8 \pi \eta |R_{ij}|}
\left(\Id+\hat{R}_{ij}\otimes \hat{R}_{ij} \right),
&  i\neq j \\ \frac{1}{6 \pi \eta a}\Id, &  i=j ,
\end{array} \right. \end{equation*}
where $R_{ij}=x_{i}-x_{j}$ and $\hat{R}_{ij}=R_{ij}/|R_{ij}|$.
This approximation works
quite well when particles are well separated ($|R_{ij}|\gg a$), but
it is degenerate for particle
configurations that involve particles relatively close. This
implies that the Oseen tensor cannot be a meaningful choice that leads
to a well-posed theory (in the sense of existence, uniqueness, macroscopic limit, \ldots).

The Rotne-Prager-Yamakawa approximation of the
mobility tensor \cite{RoPra,Ya} is a non negative correction
to the Oseen tensor that applies to all particle configurations. In addition, Rotne
and Prager \cite{RoPra} obtained a way to calculate
mobilities for overlapping spheres. The expression for the RPY $3N \times 3N$ mobility has blocks
\begin{equation*} \mu_{ij}^{RPY}=\left \{ \begin{array}{ll} \frac{1}{8 \pi \eta |R_{ij}|}
\left[ \left(1+\frac{2a^2}{3|R_{ij}|^2}\right)\Id +\left(1-\frac{2a^2}{|R_{ij}|^2}\right)
\hat{R}_{ij} \otimes \hat{R}_{ij}\right], & |R_{ij}|>2a \\ \frac{1}{6 \pi \eta a}
\left[ \left(1-\frac{9|R_{ij}|}{32a}\right)\Id +\frac{3|R_{ij}|}
{32a} \hat{R}_{ij}\otimes \hat{R}_{ij}\right], & |R_{ij}| \leq 2a .
\end{array} \right. \end{equation*}
Eigenvalues of the tensor depend continuously on the particles'
positions, they are bounded and the RPY mobility is
locally integrable in space. On the
other hand, the tensor is still not strictly positive. In more detail, when
two spheres (of radius $a$) almost coincide
and their centers have distance $d=|R_{ij}|\ll a$, then the minimum
eigenvalues $\lambda_{min}(x_{1},x_{2})$ of RPY
are of order $O(d)$. This in turn implies that the
friction associated to the RPY tensor is of order $O(\frac{1}{d})$
and hence gives an integrable singularity (satisfying the assumptions of Theorem 1).

We should note that the exact computation of the eigenvalues of the RPY mobility for
$N>2$ is impossible and the problem of directly obtaining the best lower bounds for
$\lambda_{min}(x_{1},\ldots,x_{N})$ is still open. On the other
hand, the additive nature of hydrodynamic interactions suggests
a bound from below that is linear with respect to particle distances. For instance, for
$N$ particles in a configuration with all interparticle distances equal to $d=|R_{ij}| \ll a \quad \forall i,j$, the
minimum eigenvalues can be computed exactly and are once again of order $O(d)$.
Moreover, for two nearly touching spheres with dimensionless
gap parameter $\xi=\frac{|R_{ij}|}{a}-2$ (with $\xi \ll 1$),
lubrication theory suggests that the leading order of the friction tensor is $O(\frac{1}{\xi})$
\cite{CiFeHiWaBl, JeOn, KiKa, Re}. The last observation implies that any
physically meaningful, non negative
choice of mobility should satisfy the assumptions of the first result.

In this work, we identify the conditions on the hydrodynamic mobility
so that a derivation of the macroscopic limit is possible.
Let us only mention that the particle system described here, without
the inclusion of Brownian motion, is not new
in math literature (see e.g. \cite{JaOt,JaPe}). For example, \cite{JaOt} gives a
study of the dynamics of particle motion
when the inertia of particles is neglected so that
the regime in which particles sink
approximately with no hydrodynamic interactions is established.

In order to study the diffusion limit of kinetic equation \ref{F-P*}, we need to introduce the
appropriate scaling to separate conservative and dissipative terms. We repeat
the scaling procedure in \cite{DeLi} that involves the
change of variables,
\begin{equation*} m=\epsilon^{2}, \qquad v'=\epsilon v, \qquad x'=x .\end{equation*}
Thus, \ref{F-P*} becomes
(after we re-introduce the notation for $x$, $v$
in the place of $x'$, $v'$ and set initial conditions) the Cauchy problem

\begin{equation} \label{F-P} \begin{gathered} \partial_{t}f_{\epsilon}+L_{\epsilon}f_{\epsilon}=0,\qquad
f_{\epsilon}(0,x,v)=f_{0,\epsilon}(x,v), \\ \text{with} \quad
 L_{\epsilon}=\frac{1}{\epsilon}
\left( v \cdot \nabla_{x}f_{\epsilon}-\nabla V(x) \cdot \nabla_{v}f_{\epsilon}\right)
-\frac{1}{\epsilon^{2}}\nabla_{v}\cdot \left( G(x)(\nabla_{v}f_{\epsilon}
+vf_{\epsilon})\right).\end{gathered} \end{equation}

Our main objective is to study the (zero mass) limit $\epsilon \to 0$, for both
$f_{\epsilon}$ and the hydrodynamical density $\rho_{\epsilon}:=\int f_{\epsilon}\, dv$, with integration
assumed everywhere over $\mathbb{R}^{n}_{v}$ ($\mathbb{R}^{n}_{x} \times \mathbb{R}^{n}_{v}$
when spatial variables are also involved). The second term of $L_{\epsilon}$ in
\eqref{F-P} is responsible for the system approaching local equilibrium Gibbs states $\rho \mathcal{M}(v)$,
with $\mathcal{M}(v)$ being the standard Maxwellian distribution
\begin{equation} \label{GD} \mathcal{M}(v)=
e^{-\frac{|v|^{2}}{2}}/(2 \pi)^{\frac{n}{2}} \end{equation}
and $\rho$ the limit of $\rho_{\epsilon}$.
The hydrodynamic limit study is the PDE analog of the Kramers-Smoluchowski
approximation for the Stochastic DE that corresponds to \ref{F-P*} \cite{Fr}.

Similar macroscopic limits in the parabolic scaling regime have been considered by many authors
in the past, and for various collision operators that lie in the fast scale $\epsilon^{-2}$.
A discussion of the literature cannot, by any means, be inclusive. We only outline here some
works that are relevant \cite{BGPS,DeGoPo,DoMaOeSc,GoPo,Po,PoSc}. For instance, in \cite{DeGoPo}
this limit is considered for the linear Boltzmann equation with a collision operator of the form
$\int \sigma (x,v,\omega)f(\omega)\, d\mu(\omega)-f \int \sigma(x,v,\omega) \, d\mu(\omega)$,
for a $\sigma$-finite measure $d \mu(\omega)$, and under the assumption that there exists
a unique stationary state $F(x,v)$ for which
$$F(x,v)\int \sigma (x,v,\omega)\, d\mu (\omega)=\int
\sigma(x,v,\omega)F(x,\omega)\, d\mu(\omega) \quad \text{a.e.}$$  The collision kernel $\sigma(x,v,\omega)$
is assumed measurable with $\int \sigma(x,v,\omega) \, d\mu(\omega)<\infty$ and it does not
satisfy the micro-reversibility condition
$\sigma(x,v,\omega)F(x,\omega)=\sigma(x,\omega,v)F(x,v)$. Such models are prominent
in the theory of plasmas, semiconductors, rarefied gases
etc. In \cite{PoSc} the authors study the parabolic limit
for the nonlinear Boltzmann operator $\mathcal{M}(v)(1-f)\int f \, dv -f
\int \mathcal{M}(v)(1-f)\, dv$. This operator appears in the study of semiconductors, where
$f(t,x,v)$ is the fraction of occupied states (occupancy number). The operator
leads to relaxation to the Fermi-Dirac distribution
$f_{F-D}(\mu,v)=\left( 1+e^{(\frac{1}{2}|v|^{2}-\mu)}\right)^{-1}$, where $\mu$ is
the Fermi energy that depends implicitly on $\rho(\mu)=\int f_{F-D}(\mu ,v) \, dv$. When the limit
is considered for $\epsilon=\frac{\tau}{L}\to 0$ (mean free path $\tau$ is
small compared to characteristic length scale $L$) then $f \to f_{F-D}$ and the Fermi energy
$\mu$ satisfies the diffusive equation $\partial_{t}\rho(\mu)=\nabla_{x}\cdot(D(\mu)\nabla_{x}(\mu-V))$,
for a diffusive coefficient $D(\mu)$ with an explicit structure. The electrostatic potential
$V(t,x)$ appears in the transport term $v \cdot \nabla_{x}f-\nabla_{x}V \cdot \nabla_{v}f$, which is in scale $\epsilon^{-1}$.
The Rosseland approximation for the radiative transfer equation
has been studied in \cite{BGPS}. Equations that lead to nonlinear
diffusions in the limit have been considered in \cite{DoMaOeSc}.

In this article, the derivation of a
convection-diffusion limit is carried out for a linear Fokker-Planck equation with dominating friction and Brownian
forcing terms governed by an anisotropic tensor $G(x)$. The equation is of particular importance in the theory of
particles moving in Stokes flows. The limiting Smoluchowski equation that we derive is the cornerstone
of the kinetic theory of polymer chains in dilute solutions \cite{Do,DoEd,Ki}.

\subsection{Main Theorems}

We now bring our attention to the two main results of hydrodynamic convergence.
In both of the results we are about to present,
we assume that the solution to equation \ref{F-P}
is weak (in the sense that will be explained in Section 2) thus allowing for quite irregular
coefficients. We make two assumptions. First, we assume a non
degenerate friction $G(x)$ such that $G^{-1}(x)$ exists a.e and second that
$e^{-V(x)}\in L^{1}(\mathbb{R}^{n}_{x})$.
These assumptions suggest that there exists a unique
global equilibrium state explicitly given by
\begin{equation} \label{Equil} \mathcal{M}_{eq}(x,v)=e^{-V(x)}\mathcal{M}(v)/Z ,
\quad \text{with} \quad Z=(2 \pi)^{\frac{n}{2}}\int e^{-V(x)}\, dx. \end{equation}
We also consider $V(x)$ bounded
from below in the sense that $\inf \, V(x)>-\infty$.

In the first theorem, we establish weak convergence of the hydrodynamic
variable $\rho_{\epsilon}(t,x)$ based on weak compactness arguments.
The proof is actually quite straightforward. We assume
a solution of \ref{F-P} in the mild-weak sense. Such a solution $f_{\epsilon}$ lives in $C(\mathbb{R}_{+},\mathcal{D}'(\mathbb{R}^{n}_{x} \times
\mathbb{R}^{n}_{v}))$. We
also make the assumption that the initial data are
in the weighted $L^{2}_{\mathcal{M}_{eq}}$ space, where
$L^{2}_{\mathcal{M}_{eq}}=\mathcal{M}_{eq}L^{2}
(\mathcal{M}_{eq}\, dv \, dx)=\mathcal{M}_{eq}L^{2}(d \mu)$ (for a
measure $\mu$ with density $\mathcal{M}_{eq}$) i.e.
\begin{equation} \label{ApEs1} \| f_{\epsilon}(0,x,v)\|_{L^{2}_{\mathcal{M}_{eq}}} < C
\quad \forall \epsilon >0, \, \text{for some} \quad C>0 . \end{equation}
We prove in Section 3 the following theorem.
\begin{theorem}
Let $f_{\epsilon}$ be a mild-weak solution to \ref{F-P} with
bounded initial energy $\|f_{\epsilon}(0,\cdot,\cdot)
\|_{L^{2}_{\mathcal{M}_{eq}}}< \infty $ (uniformly in
$\epsilon>0$), and let $\rho_{\epsilon}$ be the hydrodynamical
density $\rho_{\epsilon}:=\int f_{\epsilon}\, dv$.
Assume that the non-degenerate a.e. friction tensor $G(x)$ and potential $V(x)$ satisfy conditions :
$G^{-1}(x) \, \& \, G(x) \in (L^{1}_{loc}(\mathbb{R}^{n}_{x}))^{n\times n}$, \, $\nabla V(x)\in (L^{2}_{loc}(\mathbb{R}^{n}_{x}))^{n}$, \, $G^{-1/2}\nabla V(x)\in (L^{2}_{loc}(\mathbb{R}^{n}_{x}))^{n}$
and $e^{-V(x)}\in L^{1}(\mathbb{R}^{n}_{x})$. In the limit $\epsilon \to 0$,
we have the following convergence
\begin{equation*} \rho_{\epsilon} \rightharpoonup \rho \qquad \text{in}
\quad C([0,T],w-L^{2}(dx)), \end{equation*}
where $\rho$ is the solution to the Smoluchowski equation
\begin{equation} \label{Sm} \partial_{t}\rho=\nabla_{x}\cdot (G^{-1}(\nabla_{x}\rho+ \nabla V(x)\rho))
\qquad \text{in} \quad C([0,T],\mathcal{D}'(\mathbb{R}^{n}_{x})).\end{equation}
\end{theorem}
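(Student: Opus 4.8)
\textbf{The plan is to} derive uniform-in-$\epsilon$ bounds from the natural $L^2_{\mathcal{M}_{eq}}$ energy estimate, use these to extract weak limits along subsequences, identify the limiting form of $f_\epsilon$ as a local Maxwellian $\rho\mathcal{M}(v)$, and then pass to the limit in the continuity-type equation obtained by integrating \eqref{F-P} in $v$. The key point throughout is that the dissipation term in scale $\epsilon^{-2}$ forces the velocity distribution to Maxwellian, while the transport term in scale $\epsilon^{-1}$, after integration against $v$, produces the flux that survives in the limit.

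\textbf{First I would} set $g_\epsilon := f_\epsilon/\mathcal{M}_{eq}$ and test the weak formulation of \eqref{F-P} against $g_\epsilon$ (justified in the mild-weak setting by the regularizations of Section 2). Since the transport part $v\cdot\nabla_x - \nabla V\cdot\nabla_v$ is skew-adjoint in $L^2(d\mu)$, its contribution vanishes, and one obtains the entropy-dissipation identity
\begin{equation*}
\frac{1}{2}\frac{d}{dt}\|f_\epsilon(t)\|_{L^2_{\mathcal{M}_{eq}}}^2
+ \frac{1}{\epsilon^2}\int G(x)\,\nabla_v g_\epsilon \cdot \nabla_v g_\epsilon \; \mathcal{M}_{eq}\,dv\,dx = 0.
\end{equation*}
Integrating in time and using \eqref{ApEs1} gives, uniformly in $\epsilon$, a bound on $\sup_{t}\|f_\epsilon(t)\|_{L^2_{\mathcal{M}_{eq}}}$ together with
\begin{equation*}
\frac{1}{\epsilon^2}\int_0^T\!\!\int G^{1/2}(x)\nabla_v g_\epsilon \cdot G^{1/2}(x)\nabla_v g_\epsilon \;\mathcal{M}_{eq}\,dv\,dx\,dt \le C.
\end{equation*}
From the first bound, by Cauchy-Schwarz in $v$ against $\mathcal{M}(v)$, one controls $\rho_\epsilon$ in $L^\infty(0,T;L^2(e^{-V}dx))$ and hence, since $\inf V>-\infty$, in $L^\infty(0,T;L^2(dx))$; this furnishes a subsequence $\rho_\epsilon \rightharpoonup \rho$ weakly-$*$. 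The dissipation bound forces $G^{1/2}\nabla_v g_\epsilon \to 0$ in $L^2(d\mu\,dt)$, and since $G^{-1}$ exists a.e. and $G\in L^1_{loc}$, one concludes $\nabla_v g_\epsilon \to 0$ in an appropriate local sense, so any weak limit $g$ of $g_\epsilon$ is $v$-independent; writing back in terms of $f$ this says the limit of $f_\epsilon$ is $\rho(t,x)\mathcal{M}(v)$ (up to the normalization absorbed in $\mathcal{M}_{eq}$), with the same $\rho$ as above.

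\textbf{Next} I would integrate \eqref{F-P} over $\mathbb{R}^n_v$. The dissipation term integrates to zero (it is a $v$-divergence), the term $v\cdot\nabla_x f_\epsilon$ gives $\nabla_x\cdot J_\epsilon$ with $J_\epsilon := \frac{1}{\epsilon}\int v f_\epsilon\,dv$, and the $\nabla V\cdot\nabla_v f_\epsilon$ term integrates to zero as well, so that $\partial_t \rho_\epsilon + \nabla_x\cdot J_\epsilon = 0$ in $\mathcal{D}'$. The crucial computation is to identify $\lim_\epsilon J_\epsilon$. For this I would multiply \eqref{F-P} by $v$, integrate in $v$, and observe that the leading $\epsilon^{-2}$ friction term yields $-\frac{1}{\epsilon^2}\int G(x)(\nabla_v f_\epsilon + v f_\epsilon)\,dv = -\frac{1}{\epsilon}G(x)J_\epsilon - \frac{1}{\epsilon^2}G(x)\!\int(\nabla_v f_\epsilon + v f_\epsilon)\,dv$; rewriting $\nabla_v f_\epsilon + v f_\epsilon = \mathcal{M}(v)\nabla_v(f_\epsilon/\mathcal{M})$ and using the dissipation estimate to show the remaining moments are $o(1)$, one arrives at the constitutive relation $G(x)J_\epsilon = -(\nabla_x\rho_\epsilon + \nabla V(x)\rho_\epsilon) + o(1)$, i.e. $J_\epsilon \to -G^{-1}(x)(\nabla_x\rho + \nabla V\rho)$ in $\mathcal{D}'$. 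Substituting into the continuity equation yields \eqref{Sm}. Finally, the improvement from weak-$*$ convergence to $C([0,T],w\text{-}L^2(dx))$ follows from an Arzelà–Ascoli argument in time: $\partial_t\rho_\epsilon$ is bounded in a negative-order space by the above, so $t\mapsto \langle\rho_\epsilon(t),\varphi\rangle$ is equicontinuous for each test $\varphi$, and density of test functions in $L^2$ together with the uniform $L^2$ bound upgrades this to the stated space.

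\textbf{The hard part will be} making rigorous the passage to the limit in the flux $J_\epsilon$ with only Sobolev/$L^1_{loc}$ regularity on $G$ and $\nabla V$: one must control the $v$-moments of $f_\epsilon$ against the weighted norms, justify the integrations by parts in $v$ in the weak formulation, and handle the product $G^{-1}(x)\nabla_x\rho$ — a product of an only-$L^1_{loc}$ matrix with a mere $L^2$ function — as a distribution. I expect the cure is to test only against compactly supported $\varphi$, use the local integrability hypotheses $G,G^{-1},\,G^{-1/2}\nabla V\in L^2_{loc}$ (or $L^1_{loc}$) precisely to make each pairing finite, and exploit that the weight $\mathcal{M}_{eq}$ decays in $v$ fast enough to absorb polynomial velocity factors. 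Establishing that $\nabla_v g_\epsilon\to 0$ strongly enough (not merely $G^{1/2}\nabla_v g_\epsilon\to 0$) to kill all error terms, given the degeneracy allowed in $G$, is the delicate technical core.
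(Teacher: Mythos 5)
Your overall skeleton (the $L^{2}_{\mathcal{M}_{eq}}$ energy identity, extraction of weak limits, Arzel\`a--Ascoli in time for $t\mapsto\int\phi\rho_{\epsilon}\,dx$, and the continuity equation from the zeroth moment) matches the paper's Lemmas 1--2 and the final step. The genuine gap is in how you close the flux relation. You propose to pass to the limit in the first-moment equation and then ``invert'' $G$ to obtain $J_{\epsilon}\to -G^{-1}(\nabla_{x}\rho+\nabla V\rho)$ in $\mathcal{D}'$. Under the stated hypotheses this step does not go through: $G$ is only in $L^{1}_{loc}$ and non-degenerate merely a.e., so multiplying a distributional identity by $G^{-1}$ is not licit, and the product $G^{-1}\nabla_{x}\rho$ of an $L^{1}_{loc}$ matrix with a function that is merely $L^{2}$ is not defined as a distribution. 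Moreover the quantity you call $J_{\epsilon}=\frac{1}{\epsilon}\int vf_{\epsilon}\,dv=\frac{1}{\epsilon}\int\mathcal{M}\nabla_{v}\tilde{g}_{\epsilon}\,dv$ is not itself controlled by the dissipation: the energy identity only bounds $\frac{1}{\epsilon}G^{1/2}\nabla_{v}\tilde{g}_{\epsilon}$ in $L^{2}(\mathcal{M}\,dv\,dx\,dt)$, and when $G$ degenerates you cannot strip off the $G^{1/2}$. Your closing remark that the ``delicate core'' is to get $\nabla_{v}g_{\epsilon}\to 0$ strongly (not just $G^{1/2}\nabla_{v}g_{\epsilon}$) is a symptom of this: that stronger convergence is exactly what is unavailable here, and the proof must be arranged so that it is never needed.

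The paper's device, which is the missing idea, is to avoid the first-moment equation entirely at the rigorous level. One writes the weak formulation of the equation satisfied by the deviation $\tilde{g}_{\epsilon}$ against test functions $\varphi(x,v)$, checks term by term (Lemma 3) that only two integrals survive as $\epsilon\to 0$, and obtains \eqref{L-RemEq}, in which the limiting flux enters only through the pairing $\int\mathcal{M}\nabla_{v}\varphi\cdot G^{1/2}J$ with $J$ the weak $L^{2}(\mathcal{M}\,dv\,dx\,dt)$ limit of $\frac{1}{\epsilon}G^{1/2}\nabla_{v}\tilde{g}_{\epsilon}$. One then chooses $\varphi=\nabla_{x}\phi\cdot G^{-1}v$ (cut off in $v$ and mollified), so that every occurrence of $G^{-1}$ or $G^{\pm 1/2}$ lands on the smooth test function $\phi$ rather than on $\rho$ or $J$; the hypotheses $G,\,G^{-1}\in L^{1}_{loc}$ and $G^{-1/2}\nabla V\in L^{2}_{loc}$ are exactly what make each such pairing finite. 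Coupling this with the limit of the zeroth-moment equation then yields the weak form of \eqref{Sm} without ever forming the product $G^{-1}\nabla_{x}\rho$ or inverting $G$ against a distribution. If you restructure your ``Next'' step along these lines, the rest of your argument is sound.
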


In the second theorem, we use the relative entropy functional to
prove an $L^{1}$ convergence result. The relative entropy $H(f|g)$ between two densities
$f,g$ is defined by
\begin{equation} \label{RE} H(f|g)=\iint f \log \frac{f}{g}\, dv \, dx ,\end{equation}
and in the present work it will be used to control the
distance of a solution $f_{\epsilon}$ of \ref{F-P} from
the local Gibbs state $\rho \mathcal{M}(v)$ as $\epsilon \to 0$.

The relative entropy has been used in the study of many asymptotic problems.
The earliest example appears to be in the study of the
hydrodynamic limit for the Ginzburg-Landau problem in \cite{Yau}.
In \cite{Va}, the author takes a probabilistic approach to the use of relative entropy.
Other more elaborate cases include the Vlasov-Navier-Stokes
system \cite{GoJaVa2}, hydrodynamic limits for the Boltzmann equation \cite{GoLeRa}.

To prove Theorem 2, we make the following assumptions. First, we
need conditions that give control of the hydrodynamical tensor $G^{-1}(x)$ and potential $V(x)$, i.e.
\begin{align}
\left.
\begin{array}{c}
  G^{-1}(x) \geq \lambda I \quad \text{for some} \quad \lambda >0 \quad \text{and} \\
  \|\nabla^{k}G^{-1}\|_{L^{\infty}(\mathbb{R}_{x}^{n})}<\infty, \quad
 \|\nabla^{k}(G^{-1}\nabla V(x))\|_{L^{\infty}(\mathbb{R}_{x}^{n})}<\infty, \quad  1 \leq k \leq 3.
\end{array}\right\} \tag{A1}\label{A1}
\end{align}
We also assume that the initial condition $\rho(0,x)$ to equation \ref{Sm} satisfies
\begin{align}
\left.
\begin{array}{c}
  a e^{-V(x)}\leq \rho(0,x) \leq A e^{-V(x)} \quad \text{for some} \quad A>a>0 \quad \text{and} \\
  \rho(0,x) / e^{-V(x)} \in W^{3,\infty}(\mathbb{R}^{n}_{x}).
\end{array}\right\} \tag{A2}\label{A2}
\end{align}
Finally, the use of the maximum principle for the parabolic equation \ref{Sm} in $\mathbb{R}^{n}_{x}$ requires
certain admissibility conditions at
infinity. We can choose for instance the following condition for a given $T>0$,
\begin{equation} \sup_{0 \leq t \leq T} \limsup_{x \to \infty} \Big| \nabla^{k}
\frac{\rho(t,x)}{e^{-V(x)}}\Big|\leq C_{k} \quad \text{for} \quad C_{k}>0, \quad 0\leq k \leq 3, \tag{A3} \label{A3} \end{equation}
where $|\cdot|$ is the Hilbert-Schmidt norm of the tensor. In Section $4$ we prove
\begin{theorem}
Let $f_{\epsilon}(0,x,v)$ be initial data to the F-P
equation \ref{F-P} such that $f_{\epsilon}(0,x,v) \geq 0$,
satisfying the energy bound
\begin{equation} \label{ApEs2} \sup \limits_{\epsilon >0} \iint f_{\epsilon}(0,x,v)(1+V(x)+
|v|^{2}+\log f_{\epsilon}(0,x,v))\, dv \, dx < C <\infty .\end{equation}
Moreover, we assume that $e^{-V(x)}\in L^{1}(\mathbb{R}^{n}_{x})$ and that the hydrodynamic tensor
$G^{-1}(x)$ and potential $V(x)$ satisfy condition \ref{A1}.
Let $\rho(0,x)\in \mathcal{D}'(\mathbb{R}^{n}_{x})$ be initial data to the limit equation \ref{Sm},
satisfying \begin{equation*} \int \rho(0,x)\, dx=\iint f_{\epsilon}(0,x,v)\, dv \, dx =1, \end{equation*}
as well as condition \ref{A2}.
We finally make the assumption that the initial data are prepared so that
\begin{equation*} H(f_{\epsilon}(0,\cdot,\cdot)|\rho(0,\cdot)\mathcal{M}(v))\to 0 \quad \text{as}
\quad \epsilon \to 0 .\end{equation*}
Then, for any $T>0$, if $\rho(t,x)\in C([0,T],\mathcal{D}'(\mathbb{R}^{n}_{x}))$ is a solution to the limit
equation that satisfies \ref{A3}, we have
\begin{equation*} \sup \limits_{0 \leq t \leq T}H(f_{\epsilon}(t,\cdot,\cdot)|\rho(t,\cdot)\mathcal{M}(v))
\to 0 \qquad \text{as}\quad \epsilon \to 0 .\end{equation*}
\end{theorem}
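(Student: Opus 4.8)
The plan is to run a relative-entropy (modulated entropy) argument. Set $g_\epsilon(t,x,v):=\rho(t,x)\mathcal{M}(v)$ where $\rho$ solves the Smoluchowski equation \ref{Sm}, and study the time evolution of $H_\epsilon(t):=H(f_\epsilon(t)|g_\epsilon(t))$. Since $H_\epsilon(0)\to 0$ by hypothesis and $H\ge 0$, by the Csisz\'ar--Kullback--Pinsker inequality a bound $\sup_{[0,T]}H_\epsilon(t)\to 0$ will immediately upgrade to $L^1$ convergence $f_\epsilon(t)\to\rho(t)\mathcal{M}(v)$, so it suffices to control $H_\epsilon$. The first step is to derive, at least formally and then justified by the weak formulation of Section 2 together with the a priori bounds \eqref{ApEs2}, the identity
\begin{equation*}
\frac{d}{dt}H_\epsilon(t) = -\frac{1}{\epsilon^2}D_\epsilon(t) + R_\epsilon(t),
\end{equation*}
where $D_\epsilon(t)=\iint f_\epsilon\,\big|G^{1/2}(x)(\nabla_v\log f_\epsilon + v)\big|^2\,dv\,dx\ge 0$ is the (nonnegative) dissipation coming from the Fokker--Planck term in \ref{F-P}, and $R_\epsilon$ collects the remaining terms produced by the transport operator $\epsilon^{-1}(v\cdot\nabla_x-\nabla V\cdot\nabla_v)$ acting against $\log g_\epsilon$ and by $\partial_t\log g_\epsilon=\partial_t\log\rho$. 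Here assumption \eqref{A1}, namely $G^{-1}\ge\lambda I$ and boundedness of derivatives of $G^{-1}$ and $G^{-1}\nabla V$, is what makes $g_\epsilon$ a good ``approximate'' local equilibrium and keeps $\nabla_x\log\rho$, $\nabla V$, $\partial_t\log\rho$ under control via the regularity furnished by \eqref{A2}--\eqref{A3} and parabolic maximum-principle bounds $a e^{-V}\le\rho(t,x)\le A e^{-V}$ on $[0,T]$.

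The crux is to show $R_\epsilon(t)\le \frac{1}{2\epsilon^2}D_\epsilon(t) + K\,H_\epsilon(t) + o(1)$ uniformly on $[0,T]$, so that Gr\"onwall closes the estimate. The mechanism is the usual one: the $\epsilon^{-1}$ transport terms, when tested against $\log(f_\epsilon/g_\epsilon)$ and after integration by parts, split into a piece that vanishes identically (the free-transport part $\iint f_\epsilon\, v\cdot\nabla_x\log\rho$ is exactly cancelled against the diffusive contribution once one uses that $\rho$ solves \ref{Sm}) and a remainder of the form $\frac{1}{\epsilon}\iint f_\epsilon\, v\cdot\big(\text{smooth bounded field}\big)$. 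This last term is the dangerous $O(\epsilon^{-1})$ one; it is handled by writing $v f_\epsilon = f_\epsilon v + f_\epsilon(\nabla_v\log f_\epsilon + v) - f_\epsilon\nabla_v\log f_\epsilon$ — equivalently, by recognizing that the microscopic flux $j_\epsilon:=\int v f_\epsilon\,dv$ can be paired with the dissipation current $G^{1/2}(\nabla_v\log f_\epsilon+v)$ — and then applying Cauchy--Schwarz in the weighted space and Young's inequality $ab\le \frac{1}{4\delta}a^2+\delta b^2$ with $\delta\sim\epsilon^2$. The $\epsilon^2$ gained from $\delta$ cancels the $\epsilon^{-2}$ prefactor, depositing a controllable multiple of $D_\epsilon$ on the left and leaving behind a term bounded by $C\,H_\epsilon$ plus lower-order pieces. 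The moment bounds $\iint f_\epsilon(1+|v|^2)$ and the entropy bound from \eqref{ApEs2}, propagated in time (these propagation estimates themselves need a short separate argument using the same entropy identity with $g=\mathcal{M}_{eq}$), are what guarantee the requisite integrability so that Cauchy--Schwarz and the integration by parts are legitimate.

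I expect the main obstacle to be precisely the rigorous justification of the entropy identity and of every integration by parts under the low regularity of $G$ and $V$ assumed only through \eqref{A1}: the solution $f_\epsilon$ is merely a weak (mild-weak) solution, $\log f_\epsilon$ need not be an admissible test function, and $f_\epsilon$ may vanish, so $\nabla_v\log f_\epsilon$ must be interpreted via $\nabla_v\sqrt{f_\epsilon}$ and the dissipation written as $4\iint|G^{1/2}\nabla_v\sqrt{f_\epsilon} + \sqrt{f_\epsilon}\,G^{1/2}v/\,|^2$-type quantities. The standard remedy is a regularization/truncation scheme: replace $\log$ by $\log(\delta+f_\epsilon)$ or by a bounded smooth approximation, mollify in $(x,v)$, derive the inequality for the regularized quantities where all manipulations are licit, and pass to the limit using lower semicontinuity of the entropy and of the dissipation functional together with the uniform bounds; one must check the error terms created by the commutators of mollification with the (only locally integrable, in Theorem 1, but here $W^{3,\infty}$-controlled) coefficients vanish. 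A secondary technical point is establishing the maximum-principle two-sided bound $a e^{-V}\le \rho(t,x)\le A e^{-V}$ and the derivative bounds on $\rho(t,x)/e^{-V(x)}$ on $[0,T]$ from \eqref{A2}--\eqref{A3}; this is where condition \eqref{A3} on the behavior at infinity is used, and it should follow from applying the classical parabolic maximum principle to the equation satisfied by $u:=\rho/e^{-V}$, which is a nondegenerate (by $G^{-1}\ge\lambda I$) second-order parabolic equation with $W^{3,\infty}$ coefficients, after verifying it has no growth at spatial infinity.
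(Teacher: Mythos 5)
Your overall strategy --- relative entropy with respect to $\rho\mathcal{M}$, the Csisz\'ar--Kullback--Pinsker upgrade to $L^{1}$, the free-energy and second-moment a priori bounds propagated from \eqref{ApEs2}, the parabolic maximum principle for $\rho/e^{-V}$ and its derivatives under \ref{A1}--\ref{A3}, and the regularization caveats --- is the paper's. But there is a genuine gap at the central step: the neutralization of the $O(\epsilon^{-1})$ cross term. Writing $\Phi:=\nabla_x\log\rho+\nabla V(x)$ and using $\partial_t\rho_\epsilon=-\epsilon^{-1}\nabla_x\cdot J_\epsilon$ together with the Smoluchowski equation \ref{Sm}, the entropy balance reads schematically
\[
\frac{d}{dt}H_\epsilon\le-\frac{1}{\epsilon^{2}}\int\frac{|G^{1/2}J_\epsilon|^{2}}{\rho_\epsilon}\,dx-\frac{1}{\epsilon}\int J_\epsilon\cdot\Phi\,dx+\int\nabla_x\rho_\epsilon\cdot G^{-1}\Phi\,dx-\int\rho_\epsilon\,\frac{\nabla\rho}{\rho}\cdot G^{-1}\Phi\,dx .
\]
Your proposed Cauchy--Schwarz/Young step applied to $\epsilon^{-1}\int J_\epsilon\cdot\Phi\,dx$ does absorb a fraction of the dissipation, but it leaves behind $\int\rho_\epsilon\,|G^{-1/2}\Phi|^{2}\,dx$, a strictly positive $O(1)$ quantity (it tends to $\int\rho\,|G^{-1/2}\Phi|^{2}\,dx>0$); it is neither $o(1)$ nor bounded by $K H_\epsilon$, so Gr\"onwall gives only $H_\epsilon(t)\le H_\epsilon(0)+Ct$, which does not vanish. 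Relatedly, your claim that $\iint f_\epsilon\,v\cdot\nabla_x\log\rho\,dv\,dx$ ``is exactly cancelled against the diffusive contribution'' cannot be right as stated: that term carries a prefactor $\epsilon^{-1}$ while the diffusive contribution of \ref{Sm} is $O(1)$.

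The missing ingredient is the first-moment (flux) equation \ref{Eq-Je2}. Substituting $\nabla_x\rho_\epsilon=-\epsilon^{-1}GJ_\epsilon-\rho_\epsilon\nabla V-\epsilon\,\partial_tJ_\epsilon-\nabla_x\cdot\int\mathcal{M}\nabla_v(f_\epsilon/\mathcal{M})\otimes v\,dv$ into the third term above produces a second copy of $-\epsilon^{-1}\int J_\epsilon\cdot\Phi\,dx$ and exactly one copy of $-\int\rho_\epsilon|G^{-1/2}\Phi|^{2}\,dx$, so that the three macroscopic pieces combine into the nonpositive complete square $-\int\big|\epsilon^{-1}G^{1/2}J_\epsilon/\rho_\epsilon+G^{-1/2}\Phi\big|^{2}\rho_\epsilon\,dx$ (after the H\"older step $|G^{1/2}J_\epsilon|^{2}/\rho_\epsilon\le\int f_\epsilon|G^{1/2}(\nabla_v\log f_\epsilon+v)|^{2}dv$, which you do identify); no Gr\"onwall term is needed at all. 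What survives is the remainder $r_\epsilon$ built from $\epsilon\,\partial_tJ_\epsilon$ and the pressure deviation paired against $G^{-1}\Phi$, and these are the terms one controls with the $O(\epsilon)$ bound on $d_\epsilon$, the second-moment bound, and the $L^{\infty}$ stability of $\nabla(G^{-1}\Phi)$, $G^{-1}\Phi$, $G^{-1}\nabla\partial_t\log\rho$ furnished by the maximum-principle lemma. Without this exact algebraic cancellation the argument does not close.
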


The rest of the paper is organized as follows. In the next section, we give a formal
derivation of the macroscopic limit and present the main steps in the proof of the two theorems
mentioned above. We also give an exact description of the type of solutions we assume for problem \ref{F-P},
in each theorem. Sections 3 \& 4 are devoted to the proof of each
theorem with all the a priori estimates.

\section{Formal derivation of the limit problem and outline of proofs of the Main Theorems}

We begin by writing the collision operator in form
\begin{equation*} \nabla_{v} \cdot (G(x)(\nabla_{v}f_{\epsilon}+vf_{\epsilon}))
=\nabla_{v}\cdot \left(\mathcal{M}(v)G(x)\nabla_{v}
\left( \frac{f_{\epsilon}}{\mathcal{M}(v)}\right)\right).\end{equation*}
This form is indicative of why the collision part of $L_{\epsilon}$
is responsible for the dissipation of energies.
Let us now introduce the hydrodynamical variables for the density $\rho_{\epsilon}$, the
flux vector $J_{\epsilon}$, and the kinetic pressure tensor $\mathbb{P}_{\epsilon}$ of the
particle system, i.e.
\begin{equation} \label{HV} \rho_{\epsilon}(t,x):=\int f_{\epsilon}\, dv,\quad J_{\epsilon}(t,x):=\int
v f_{\epsilon}\, dv, \quad \mathbb{P}_{\epsilon}(t,x):=\int v \otimes v f_{\epsilon}\, dv .\end{equation}
In the study of the limit $\epsilon \to 0$, we want to derive an equation for the hydrodynamic
variable $\rho(t,x)$ which is formally the limit of $\rho_{\epsilon}$.

First, integrating \ref{F-P} in velocity space, we obtain
\begin{equation} \label{Eq-Rhoe} \partial_{t}\rho_{\epsilon}+\frac{1}
{\epsilon}\nabla_{x}\cdot J_{\epsilon}=0 .\end{equation}
We want to derive an expression for the evolution of $J_{\epsilon}$ and
study the order of magnitude in $\epsilon$ of the terms involved in it. In the
derivation of the equation for the first moment, we multiply
the F-P eq. \ref{F-P} by $v$ and integrate in velocity.
The resulting equation is
\begin{equation} \label{Eq-Je} \epsilon^{2} \partial_{t}J_{\epsilon}(t,x)+\epsilon (\nabla_{x}\cdot
\mathbb{P}_{\epsilon}(t,x)+ \nabla V(x)\rho_{\epsilon}(t,x))=-G(x)J_{\epsilon}(t,x).\end{equation}
As we show in our proof, the main contributions in \ref{Eq-Je}
come from the rhs term and the second and third terms in the lhs. Indeed, rewriting the
pressure tensor we have
\begin{equation*} \int v_{i}v_{j}f_{\epsilon}\, dv=-\int \partial_{v_{i}}(\mathcal{M})
v_{j}\frac{f_{\epsilon}}{\mathcal{M}}\, dv=\int \delta_{ij}f_{\epsilon}\, dv+ \int
\mathcal{M}\partial_{v_{i}}\left( \frac{f_{\epsilon}}{\mathcal{M}}\right)v_{j}\, dv ,\end{equation*}
which implies
\begin{equation} \label{Eq-Pe}  \mathbb{P}_{\epsilon}(t,x)= \rho_{\epsilon}I+
\int \mathcal{M} \nabla_{v}\left( \frac{f_{\epsilon}}
{\mathcal{M}} \right)\otimes v \, dv .\end{equation}
With the help of \ref{Eq-Pe}, equation \ref{Eq-Je} now gives
\begin{align} \nonumber J_{\epsilon}=-\epsilon G^{-1}(x)(\nabla_{x}\rho_{\epsilon}
+\nabla V(x)\rho_{\epsilon})-&\epsilon^{2}G^{-1}(x)\partial_{t}J_{\epsilon} \\ \label{Eq-Je2}
-&\epsilon G^{-1}(x)\nabla_{x}\cdot \int \mathcal{M} \nabla_{v}
\left( \frac{f_{\epsilon}}{\mathcal{M}}\right)\otimes v \, dv .\end{align}
The last term in \ref{Eq-Je2} contains the part
$\int \mathcal{M}\nabla_{v}\left( \frac{f_{\epsilon}}{\mathcal{M}}\right)\otimes v\, dv$ which
appears in the expression for $\mathbb{P}_{\epsilon}(t,x)$. This term will be shown to be of order $\epsilon$
if one uses the appropriate a priori estimate e.g. in $L^{2}(\mu)$. This implies that
in the limit $\epsilon \to 0$, we should be able to establish that
$\mathbb{P}_{\epsilon}(t,x)\to \rho(t,x) I$. The term
$\epsilon^{2}G^{-1}(x)\partial_{t}J_{\epsilon}$ will be shown to be of order $\epsilon^{2}$,
as long as we give an appropriate interpretation to a solution $J_{\epsilon}(t,x)$ of \ref{Eq-Je2}.
Hence, we will justify rigorously the following expansion for $J_{\epsilon}$,
\begin{equation} \label{FEJ} J_{\epsilon}(t,x)=-\epsilon G^{-1}(x)(\nabla_{x}\rho_{\epsilon}
+\nabla V(x)\rho_{\epsilon}) + \epsilon^{2}\ldots .\end{equation}
Finally, as we let $\epsilon \to 0$, the system of equations \ref{Eq-Rhoe} \& \ref{FEJ} converges to
\begin{align*} & \qquad \partial_{t}\rho +\nabla_{x}\cdot J =0 \\
J&=-G^{-1}(x)(\nabla_{x}\rho +\nabla V(x)\rho) , \end{align*}
where $J$ is the limit of $J_{\epsilon}/\epsilon$. At the same time, since $f_{\epsilon}$
approaches local Gibbs states, it follows that $f_{\epsilon}\to \rho(t,x)\mathcal{M}(v)$.
All this is enough to suggest that the limit
equation for $\rho$ solves the Smoluchowski equation
\ref{Sm}.

It is now time to give a brief step by step
outline of the proof of Theorems 1 \& 2. We begin with the first result,
in which we show weak convergence to the solution of the limiting problem.

In the first step of the proof, we decompose $f_{\epsilon}(t,x,v)$ into a local equilibrium state
$\mathcal{M}(v)\rho_{\epsilon}(t,x)$, and a deviation $\mathcal{M}(v)\tilde{g}_{\epsilon}(t,x,v)$. With
the help of the a priori energy estimate we can extract convergent subsequences for $\rho_{\epsilon}(t,x)$,
$\tilde{g}_{\epsilon}$, and $\frac{1}{\epsilon}G^{1/2}(x)\nabla_{v}\tilde{g}_{\epsilon}(t,x,v)$. Then,
we can show that $\rho_{\epsilon}$ is compact in $C([0,T],w-L^{2}(\mathbb{R}^{n}_{x}))$,
for any $T>0$. Next, we write an evolution equation for $\tilde{g}_{\epsilon}(t,x,v)$ (an equation
in the distributional sense) and pass to the limit $\epsilon \to 0$. To achieve this, since we are dealing with a
weak formulation, we have to find the order in $\epsilon$ of each integral term in this equation and
ignore all the lower order terms in $\epsilon$. The last step is to use the limit
equation for $\tilde{g}_{\epsilon}(t,x,v)$ and the limit equation for $\rho_{\epsilon}(t,x)$ to
derive the Smoluchowski equation.

In terms of the type of solutions we work with, we shall assume
that the operator $L_{\epsilon}$ generates a continuous semigroup in $L^{2}_{\mathcal{M}_{eq}}$, so we write
$f_{\epsilon}(t,x,v)=e^{-tL_{\epsilon}}f_{\epsilon}(0,x,v)$. Using the maximum principle and
energy dissipation (see Section 3), it
is easy to show that solutions to $\partial_{t}f_{\epsilon}+L_{\epsilon}f_{\epsilon}=0$ remain
bounded in $L^{2}_{\mathcal{M}_{eq}}\cap L^{\infty}$. We define
\begin{definition}
A mild-weak solution $f_{\epsilon}$ of \ref{F-P} lies in the
space \begin{equation} \label{Sp1} f_{\epsilon} \in C(\mathbb{R}_{+};\mathcal{D}'(\mathbb{R}_{x}^{n}\times
\mathbb{R}^{n}_{v}))
\cap L^{\infty}_{loc}(\mathbb{R}_{+};L^{2}_{\mathcal{M}_{eq}}\cap L^{\infty}) \end{equation}
and satisfies
\begin{align*}  \iint f_{\epsilon}(T,\cdot,\cdot)& \varphi(\cdot,\cdot) \, dv \, dx
-\iint f_{\epsilon}(0,\cdot,\cdot)\varphi(\cdot,\cdot)\, dv \, dx \\ &
-\frac{1}{\epsilon}\int_{0}^{T}\!\!\!\! \iint \left( v \cdot \nabla_{x}\varphi
-\nabla V(x)\cdot \nabla_{v}\varphi\right) f_{\epsilon} \, dv \, dx \, ds \\ & \qquad \qquad
+\frac{1}{\epsilon^{2}} \int_{0}^{T}\!\!\!\! \iint \nabla_{v}\varphi \cdot G(x)
(\nabla_{v}f_{\epsilon}+vf_{\epsilon})\, dv \, dx \, ds =0  \, ,\end{align*}
for any test function $\varphi(x,v) \in C^{1}_{c}(\mathbb{R}^{n}_{x}\times \mathbb{R}^{n}_{v})$ and $T>0$.
\end{definition}

For the second result, we use the relative entropy of $f_{\epsilon}$ with respect to local equilibrium states.
The relative entropy functional $H(f|g)$ between two probability
densities $f,g$ is a measure of distance between them. Indeed, by the celebrated
Csisz\'ar-Kullback-Pinsker inequality (\cite{Cs,Ku,Pin}) we have
\begin{equation*} \| f-g\|_{L^{1}}\leq \sqrt{2H(f|g)} .\end{equation*}
Thus, by finding $\lim \limits_{\epsilon \to 0} H(f_{\epsilon}|\rho \mathcal{M})$ we can control the square
of the $L^{1}$ distance between $f_{\epsilon}$ and $\rho \mathcal{M}$ in the limit $\epsilon \to 0$. Here
we show that the dissipation of relative entropy $H(f_{\epsilon}|\rho \mathcal{M})$ contains a
non negative part and remainder terms. It is important to show that these remainder terms vanish as
$\epsilon \to 0$. Once we show that in the limit the relative entropy is strictly dissipative, it will
be enough to consider initial data ``prepared'' in a way such that
$H(f_{\epsilon}(0,\cdot,\cdot)|\rho(0,\cdot)\mathcal{M})\to 0$ as $\epsilon \to 0$, so it follows that
$H(f_{\epsilon}(t,\cdot,\cdot)|\rho(t,\cdot)\mathcal{M})\to 0$ with $t \in [0,T]$, for any $T>0$.

We work with weak solutions of equation \ref{F-P}. Such solutions
have been shown to exist in \cite{LeBrLi2} for coefficients
that have a Sobolev type of regularity and satisfy certain growth assumptions (see Proposition 1 below).
\begin{definition}
A weak solution $f_{\epsilon}$ of \ref{F-P} belongs to the space
\begin{equation} \label{Sp2} X:=\{f_{\epsilon} | \,
f_{\epsilon} \in L^{\infty}([0,T];L^{1}\cap L^{\infty})\quad \& \quad
G^{1/2}\nabla_{v}f_{\epsilon} \in (L^{2}([0,T],L^{2}))^{n} \} , \end{equation}
for all times $T>0$ (with $f_{\epsilon}(0,\cdot,\cdot)\in L^{1}\cap L^{\infty}$). To be more
precise, a weak solution $f_{\epsilon}$ satisfies
\begin{align*} & \iint f_{\epsilon}(T,\cdot,\cdot)\varphi(T,\cdot,\cdot)\, dv \, dx
-\iint f_{\epsilon}(0,\cdot,\cdot)\varphi(0,\cdot,\cdot)\, dv \, dx -
\int_{0}^{T}\!\!\!\! \iint f_{\epsilon}\partial_{t}\varphi \, dv \, dx \, ds \\ &
-\frac{1}{\epsilon}\int_{0}^{T}\!\!\!\! \iint \left( v \cdot \nabla_{x}\varphi
-\nabla V(x)\cdot \nabla_{v}\varphi\right) f_{\epsilon} \, dv \, dx \, ds \\ & \qquad \qquad
+\frac{1}{\epsilon^{2}} \int_{0}^{T}\!\!\!\! \iint \nabla_{v}\varphi \cdot G(x)
(\nabla_{v}f_{\epsilon}+vf_{\epsilon})\, dv \, dx \, ds =0  \, ,\end{align*}
for any test function $\varphi(t,x,v)\in C^{1}((0,T);C^{1}_{c}(\mathbb{R}^{n}_{x}\times \mathbb{R}^{n}_{v}))\cap
C([0,T];C^{1}_{c}(\mathbb{R}^{n}_{x}\times \mathbb{R}^{n}_{v}))$.
\end{definition}
Notice that the
definition of a mild-weak solution (given earlier) is similar to the one
for weak solutions presented above. Main difference is that in the case of weak solutions,
the weak formulation requires that test functions
are also functions of time $t$. The existence of a unique weak
solution, for coefficients that are not smooth, is given in the following proposition borrowed from \cite{LeBrLi2}.
\begin{proposition} (see \cite{LeBrLi2}) Assume that the potential $V(x)$ and
diffusion $G^{1/2}(x)$ satisfy the following assumptions:
\begin{equation*}  (i)\quad G(x)v+\nabla V(x) \in (W^{1,1}_{loc}
(\mathbb{R}^{n}_{x}\times \mathbb{R}^{n}_{v}))^{n} \qquad
 (ii) \quad tr(G) \in L^{\infty}(\mathbb{R}^{n}_{x}) \qquad \end{equation*}
  \begin{equation*} (iii) \quad \frac{G(x)v+\nabla V(x)}
  {1+|x|+|v|} \in (L^{\infty}(\mathbb{R}^{n}_{x}\times \mathbb{R}^{n}_{v}))^{n} \end{equation*}
\begin{equation*} (iv)\quad G^{1/2}(x) \in (W^{1,2}_{loc}(\mathbb{R}^{n}_{x}))^{n \times n}\qquad (v)\quad
\frac{G^{1/2}(x)}{1+|x|} \in (L^{\infty}(\mathbb{R}^{n}_{x}))^{n \times n}. \end{equation*}
Then, given initial data $f_{\epsilon}(0,\cdot,\cdot)\in L^{1}\cap L^{\infty}$, there exists a
unique weak solution $f_{\epsilon}$
of \ref{F-P} that belongs to $X$.
\end{proposition}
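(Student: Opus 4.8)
The plan is to reproduce, in our setting, the well-posedness proof of Le Bris--Lions \cite{LeBrLi2}. First I would recast \ref{F-P} as a degenerate Fokker--Planck equation on $\mathbb{R}^{2n}=\mathbb{R}^n_x\times\mathbb{R}^n_v$. Writing $z=(x,v)$, equation \ref{F-P} reads
\[
\partial_t f_\epsilon+\dv_z(b\,f_\epsilon)-\dv_z(a\,\nabla_z f_\epsilon)=0,\qquad
a=\begin{pmatrix}0&0\\0&\tfrac{1}{\epsilon^{2}}G(x)\end{pmatrix}=\sigma\sigma^{T},\quad
\sigma=\begin{pmatrix}0&0\\0&\tfrac{1}{\epsilon}G^{1/2}(x)\end{pmatrix},
\]
with drift $b=\tfrac{1}{\epsilon}\big(v,-\nabla V(x)\big)-\tfrac{1}{\epsilon^{2}}\big(0,G(x)v\big)$; here $a$ is nonnegative but degenerate (no diffusion in $x$). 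Assumptions $(i)$--$(v)$ are precisely the hypotheses needed for this operator: $(i)$ gives $b\in(W^{1,1}_{loc})^{2n}$; the transport part of $b$ is divergence free, so $\dv_z b=-\tfrac{1}{\epsilon^{2}}\,\mathrm{tr}\,G(x)$, which is $\le 0$ and in $L^{\infty}$ by $(ii)$; $(iv)$ gives $\sigma\in(W^{1,2}_{loc})^{2n\times 2n}$, a condition on the \emph{square root} $G^{1/2}$ rather than on $G$; and $(iii)$, $(v)$ supply the linear-growth bounds $|b(z)|/(1+|z|)$, $\|\sigma(z)\|/(1+|z|)\in L^{\infty}$ that prevent mass from escaping to infinity.

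For existence I would regularise: mollify and truncate the coefficients to obtain smooth $G_k,V_k$ (adding, if convenient, an artificial viscosity $\tfrac{1}{k}\Delta_z$ to make the problem uniformly parabolic), solve the resulting Cauchy problem by classical parabolic theory to get $f^k\ge0$, and establish a priori bounds uniform in $k$: mass conservation $\|f^k(t)\|_{L^{1}}=\|f^k(0)\|_{L^{1}}$; an $L^{p}$ estimate obtained by testing with $p|f^k|^{p-2}f^k$ and using $a\ge0$ together with $(\dv_z b_k)^{-}\in L^{\infty}$, giving $\|f^k(t)\|_{L^{p}}\le e^{C(\epsilon)t}\|f^k(0)\|_{L^{p}}$ with $C(\epsilon)$ independent of $p$, hence (as $p\to\infty$) a uniform $L^{\infty}([0,T];L^{\infty})$ bound; and the energy identity obtained by testing with $f^k$, which after absorbing a lower-order term controlled by $\|\mathrm{tr}\,G_k\|_{L^{\infty}}$ yields $\|G_k^{1/2}\nabla_v f^k\|_{L^{2}([0,T]\times\mathbb{R}^{2n})}\le C(\epsilon,T)$. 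Passing to a subsequence, $f^k$ converges weakly-$*$ in $L^{\infty}_t(L^{1}\cap L^{\infty})$ and $G_k^{1/2}\nabla_v f^k$ weakly in $L^{2}$; since the equation is linear in $f_\epsilon$, one passes to the limit in every term of the weak formulation with no time-compactness needed, the only delicate point being the collision integral: writing its leading part as $\int(\nabla_v\varphi\,G_k^{1/2})\cdot(G_k^{1/2}\nabla_v f^k)$ and using that $G_k^{1/2}\to G^{1/2}$ \emph{strongly} in $L^{2}_{loc}$ against the weak $L^{2}$ limit identifies this limit as $\int(\nabla_v\varphi\,G^{1/2})\cdot(G^{1/2}\nabla_v f_\epsilon)$ and shows $f_\epsilon\in X$ solves \ref{F-P}. (Only local boundedness of $G^{1/2}$, not its $W^{1,2}_{loc}$ regularity, is needed at this stage.)

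For uniqueness in $X$ --- the heart of the matter --- let $f_1,f_2\in X$ solve \ref{F-P} with the same initial data; then $w=f_1-f_2\in X$ solves the same equation with $w(0,\cdot,\cdot)=0$. Mollifying only in $z$, $w_\eta=w*\varrho_\eta$ satisfies $\partial_t w_\eta+\dv_z(b\,w_\eta)-\dv_z(a\,\nabla_z w_\eta)=r_\eta$, where $r_\eta$ is the usual commutator. The DiPerna--Lions/Le Bris--Lions commutator lemma --- using $b\in W^{1,1}_{loc}$, $\sigma\in W^{1,2}_{loc}$, the $L^2$ control $\sigma\nabla w=G^{1/2}\nabla_v w\in L^2$ matched to $\sigma$, and the linear-growth bounds --- gives $r_\eta\to0$ in $L^{1}([0,T]\times\mathbb{R}^{2n})$. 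Renormalising with $\beta_\kappa(s)=\sqrt{s^{2}+\kappa^{2}}-\kappa$,
\[
\partial_t\beta_\kappa(w_\eta)+\dv_z\big(b\,\beta_\kappa(w_\eta)\big)-\dv_z\big(a\,\nabla_z\beta_\kappa(w_\eta)\big)
=-\beta_\kappa''(w_\eta)\,\big|\sigma^{T}\nabla_z w_\eta\big|^{2}+(\dv_z b)\big(\beta_\kappa(w_\eta)-w_\eta\,\beta_\kappa'(w_\eta)\big)+\beta_\kappa'(w_\eta)\,r_\eta ,
\]
I would integrate in $z$ against cutoffs expanding to fill $\mathbb{R}^{2n}$ (boundary terms vanish thanks to the linear growth of $b,\sigma$), drop the manifestly nonpositive first term on the right, then send $\eta\to0$ and $\kappa\to0$; in the limit $\beta_\kappa(w_\eta)\to|w|$ and $\beta_\kappa(s)-s\,\beta_\kappa'(s)\to0$, so one obtains $\int|w(t)|\,dv\,dx\le\int|w(0)|\,dv\,dx=0$, i.e.\ $w\equiv0$. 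Together with the existence step this gives the unique weak solution in $X$.

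The main obstacle is the commutator lemma for the second-order term $\dv_z(a\,\nabla_z w)$: the relevant commutator splits into a first-order-type part handled as in DiPerna--Lions and a part controlled by $\sigma(z)-\sigma(z')$ tested against $\nabla w(z')$, whose smallness as $\eta\to0$ requires exactly $\sigma\in W^{1,2}_{loc}$ --- which is why hypothesis $(iv)$ is placed on $G^{1/2}$ --- together with the $L^{2}$ bound $G^{1/2}\nabla_v w\in L^{2}$ coming from the energy estimate; matching these two ingredients is the delicate step. A secondary technical point is making the integrations by parts on all of $\mathbb{R}^{2n}$ rigorous using only the growth conditions $(iii)$, $(v)$, which is done by a careful choice of cutoff.
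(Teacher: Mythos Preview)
The paper does not supply its own proof of this proposition: it is simply quoted from \cite{LeBrLi2} (note the parenthetical ``see \cite{LeBrLi2}'' in the statement), and the text moves on immediately to Section~3. So there is nothing to compare your argument against in the present paper.

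That said, your sketch is a faithful outline of the Le~Bris--Lions strategy itself: the reformulation as a degenerate Fokker--Planck equation in $z=(x,v)$ with $a=\sigma\sigma^{T}$, the identification of hypotheses $(i)$--$(v)$ with the structural assumptions on $b$ and $\sigma$, existence by regularisation and weak compactness using the $L^{p}$ and energy bounds, and uniqueness via renormalisation together with the second-order commutator lemma (which is precisely why the $W^{1,2}_{loc}$ assumption falls on $G^{1/2}$ rather than on $G$). Your emphasis on matching $\sigma\in W^{1,2}_{loc}$ with $G^{1/2}\nabla_v w\in L^{2}$ in the commutator estimate, and on the linear-growth conditions $(iii)$, $(v)$ for the cutoff argument, is exactly right. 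In short: correct approach, and essentially the one in the cited reference; the present paper just invokes the result.
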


\section{Diffusive limit via weak compactness. Proof of Theorem 1}

\subsection{A priori estimate and weak compactness}

In this section we collect all the results related to convergence needed for the proof of Theorem 1.
We begin with the decomposition of $f_{\epsilon}$. We write
\begin{equation*} f_{\epsilon}=\mathcal{M}(v)(\rho_{\epsilon}+\tilde{g}_{\epsilon}), \end{equation*}
where the hydrodynamic variable $\rho_{\epsilon}$ has already been defined in \ref{HV}
and $\tilde{g}_{\epsilon}$ is a deviation from the local equilibrium
state $\rho_{\epsilon}\mathcal{M}(v)$ that satisfies
\begin{equation*} \int \tilde{g}_{\epsilon} \mathcal{M}(v) \, dv =0.\end{equation*}
We also note that integrating \ref{F-P} in velocity
we obtain the hydrodynamic equation for $\rho_{\epsilon}$
\begin{equation} \label{HE} \partial_{t}\rho_{\epsilon}+\frac{1}{\epsilon} \nabla_{x} \cdot
\int \mathcal{M}(v) \nabla_{v}\tilde{g}_{\epsilon} \, dv =0. \end{equation}
We prove the following.
\begin{lemma}
Assume a mild-weak solution $f_{\epsilon}$ of \ref{F-P} with an
$L^{2}_{\mathcal{M}_{eq}}$ bound on the initial data i.e.
$\| f_{\epsilon}(0,\cdot,\cdot)\|_{L^{2}_{\mathcal{M}_{eq}}}<\infty$.
Then, there exists a sequence $\epsilon_{i}\to 0$ such that
\begin{align*} \rho_{\epsilon_{i}} & \rightharpoonup \rho \quad \text{weakly in}
\quad L^{2}(dx) \quad \forall t \geq 0,
\\ \tilde{g}_{\epsilon_{i}}& \rightharpoonup \tilde{g} \quad
\text{weakly in} \quad L^{2}(\mathcal{M}(v) dv dx) \quad \forall t \geq 0,
\\ \frac{1}{\epsilon_{i}}G^{1/2}\nabla_{v}\tilde{g}_{\epsilon_{i}}
& \rightharpoonup J \quad \text{weakly in}
\quad L^{2}(\mathcal{M}(v) dv dx dt).
\end{align*}
\end{lemma}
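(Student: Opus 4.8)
The plan is to derive a single uniform-in-$\epsilon$ energy estimate in $L^2_{\mathcal M_{eq}}$ and then read off the three weak-compactness statements from it. First I would test the equation $\partial_t f_\epsilon + L_\epsilon f_\epsilon = 0$ against $f_\epsilon/\mathcal M_{eq}$ (which is the natural dual pairing for the weighted space, and is legitimate because a mild-weak solution stays in $L^2_{\mathcal M_{eq}}\cap L^\infty$). The transport part $\tfrac1\epsilon(v\cdot\nabla_x - \nabla V\cdot\nabla_v)$ is skew-symmetric with respect to the measure $\mathcal M_{eq}\,dv\,dx$ — this is exactly why the assumptions $\nabla V\in L^2_{loc}$ and $G^{-1/2}\nabla V\in L^2_{loc}$ appear, to justify the integration by parts — so it contributes nothing. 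The collision part, written in the form $-\tfrac1{\epsilon^2}\nabla_v\cdot(\mathcal M(v)G(x)\nabla_v(f_\epsilon/\mathcal M))$, yields after integration by parts the manifestly nonnegative dissipation $\tfrac1{\epsilon^2}\iint \mathcal M(v)\,\nabla_v(f_\epsilon/\mathcal M)\cdot G(x)\,\nabla_v(f_\epsilon/\mathcal M)\,dv\,dx$. This gives the identity
\begin{equation*}
\frac12\frac{d}{dt}\|f_\epsilon(t)\|_{L^2_{\mathcal M_{eq}}}^2 + \frac1{\epsilon^2}\iint \mathcal M(v)\,\big|G^{1/2}(x)\nabla_v\big(\tfrac{f_\epsilon}{\mathcal M}\big)\big|^2\,dv\,dx = 0,
\end{equation*}
hence, integrating in time, $\|f_\epsilon(t)\|_{L^2_{\mathcal M_{eq}}}\le \|f_\epsilon(0)\|_{L^2_{\mathcal M_{eq}}}\le C$ uniformly, and $\tfrac1{\epsilon^2}\int_0^T\!\!\iint \mathcal M|G^{1/2}\nabla_v(f_\epsilon/\mathcal M)|^2\le C$.

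Next I would translate these into bounds on $\rho_\epsilon$ and $\tilde g_\epsilon$. Writing $f_\epsilon = \mathcal M(v)(\rho_\epsilon + \tilde g_\epsilon)$ with $\int \tilde g_\epsilon\mathcal M\,dv = 0$, the orthogonality of the decomposition in $L^2(\mathcal M\,dv)$ gives $\|f_\epsilon\|_{L^2_{\mathcal M_{eq}}}^2 \gtrsim \int e^{-V}\rho_\epsilon^2\,dx + \iint \mathcal M\,e^{-V}\,\tilde g_\epsilon^2\,dv\,dx$ (up to the normalizing constant $Z$). Since $\inf V > -\infty$, the weight $e^{-V}$ is bounded below on compact sets but more importantly $e^{-V}$ being a finite measure lets me bound, for each $t$, $\|\rho_\epsilon(t)\|_{L^2(dx)}$... — here one must be slightly careful: $e^{-V}$ bounded below is false globally, so rather than an unweighted $L^2(dx)$ bound one gets $\rho_\epsilon$ bounded in $L^2(e^{-V}dx)$, and then uses $\rho = e^{-V}h$ with $h$ controlled. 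I would instead phrase the intermediate bounds in the weighted spaces and note that the limit equation and convergence statement can be read in $L^2(dx)$ once one observes $\rho_\epsilon\mathcal M_{eq}^{-1/2}$ is bounded in $L^2(\mathcal M_{eq})$; alternatively, since the statement claims weak $L^2(dx)$ convergence, I would absorb the weight by working with $f_\epsilon/\mathcal M_{eq}^{1/2}$ throughout. In any case, $\rho_\epsilon$ and $\tilde g_\epsilon$ are bounded, uniformly in $\epsilon$ and $t$, in the appropriate Hilbert spaces, and $\tfrac1\epsilon G^{1/2}\nabla_v\tilde g_\epsilon$ is bounded in $L^2(\mathcal M\,dv\,dx\,dt)$ because $\nabla_v(f_\epsilon/\mathcal M) = \nabla_v\tilde g_\epsilon$.

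Finally, I would extract the subsequences. For the space–time bounded quantity $\tfrac1\epsilon G^{1/2}\nabla_v\tilde g_\epsilon$, Banach–Alaoglu in the Hilbert space $L^2(\mathcal M\,dv\,dx\,dt)$ immediately gives a weakly convergent subsequence with some limit $J$. For $\rho_\epsilon$ and $\tilde g_\epsilon$, the bounds hold for every fixed $t$, so I would take a countable dense set of times, diagonalize to get weak convergence along a single sequence $\epsilon_i$ for all rational $t$, and then use an equicontinuity-in-time estimate — obtained by controlling $\partial_t\rho_\epsilon$ from \eqref{HE} and the just-proven dissipation bound (this shows $t\mapsto \rho_\epsilon(t)$ is uniformly continuous into $w\text{-}L^2$, which is the $C([0,T],w\text{-}L^2)$ compactness mentioned in the outline) — to pass to weak convergence for all $t$. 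The main obstacle, and the step deserving the most care, is precisely this handling of the weight $e^{-V}$ (which is not bounded below) so that the claimed convergence is genuinely in $L^2(dx)$ rather than merely in a weighted space; the rest is a textbook uniform-bound-plus-Alaoglu argument. The skew-symmetry of the transport operator in the weighted pairing — the one place the $L^2_{loc}$ hypotheses on $\nabla V$ and $G^{-1/2}\nabla V$ are actually used — should also be verified by an approximation/cutoff argument rather than taken for granted, since the coefficients are only locally integrable.
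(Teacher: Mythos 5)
Your overall strategy --- the $L^{2}_{\mathcal M_{eq}}$ energy identity obtained by testing against $f_{\epsilon}/\mathcal M_{eq}$, the orthogonal decomposition $f_{\epsilon}=\mathcal M(v)(\rho_{\epsilon}+\tilde g_{\epsilon})$, and Banach--Alaoglu plus a diagonal/equicontinuity argument --- is exactly the paper's. However, there is a concrete error at the step you yourself single out as ``the main obstacle'': the weight in your lower bound is wrong, and the obstacle you then try to work around does not exist. Since $\|f\|^{2}_{L^{2}_{\mathcal M_{eq}}}=\iint f^{2}\,\mathcal M_{eq}^{-1}\,dv\,dx$ and $\mathcal M_{eq}=e^{-V}\mathcal M(v)/Z$, substituting the decomposition and using $\int\tilde g_{\epsilon}\mathcal M\,dv=0$ gives
\[
\|f_{\epsilon}\|^{2}_{L^{2}_{\mathcal M_{eq}}}=Z\int e^{+V}\rho_{\epsilon}^{2}\,dx+Z\iint e^{+V}\,\tilde g_{\epsilon}^{2}\,\mathcal M(v)\,dv\,dx ,
\]
so the weight is $e^{V}$, not $e^{-V}$. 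Because the standing assumption is $\inf V>-\infty$, one has $e^{V}\geq e^{\inf V}>0$ everywhere, and the unweighted bounds $\int\rho_{\epsilon}^{2}\,dx\leq C$ and $\iint\tilde g_{\epsilon}^{2}\mathcal M\,dv\,dx\leq C$ follow at once; this is precisely the paper's ``simple Jensen inequality'' step (equivalently, $\rho_{\epsilon}=\tfrac{e^{-V}}{Z}\int h_{\epsilon}\mathcal M\,dv$ with $h_{\epsilon}=f_{\epsilon}/\mathcal M_{eq}$, then Cauchy--Schwarz in $\mathcal M\,dv$). Your worry that ``$e^{-V}$ is not bounded below'' would indeed be fatal if the weight were $e^{-V}$ --- integrability of $e^{-V}$ forces it to vanish at infinity --- but it is the reciprocal weight that appears, and it is bounded below exactly by the hypothesis $\inf V>-\infty$. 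The substitute fixes you sketch (working in $L^{2}(e^{-V}dx)$, or ``observing $\rho_{\epsilon}\mathcal M_{eq}^{-1/2}$ is bounded in $L^{2}(\mathcal M_{eq})$'') are respectively insufficient and circular as written, and should be replaced by the computation above. The same correction applies to your dissipation identity: testing against $h_{\epsilon}$ produces $\tfrac{1}{\epsilon^{2}}\int|G^{1/2}\nabla_{v}h_{\epsilon}|^{2}\,d\mu$, which carries the $e^{V}$ weight and therefore dominates the unweighted quantity $\tfrac{1}{\epsilon^{2}}\iint\mathcal M|G^{1/2}\nabla_{v}\tilde g_{\epsilon}|^{2}\,dv\,dx$ appearing in the third convergence.

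With that repaired, the remainder of your argument is sound and in places more explicit than the paper: the extraction via Alaoglu in $L^{2}(\mathcal M\,dv\,dx\,dt)$ for the flux, and the diagonalization over a countable dense set of times combined with an equicontinuity-in-time estimate for the pointwise-in-$t$ weak limits, is essentially the content of the paper's Lemma 2 (the paper itself merely asserts the extraction at this stage). One small attribution point: the hypotheses $\nabla V\in L^{2}_{loc}$ and $G^{-1/2}\nabla V\in L^{2}_{loc}$ are used in the paper mainly in the later passage to the limit (the estimates of the transport terms against test functions), not to justify the skew-symmetry in the energy identity, though your instinct that the formal integration by parts needs a cutoff argument for rough coefficients is reasonable.
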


\begin{proof}
In order to study the limit $\epsilon \to 0$, we begin with the a priori estimate in
$L^{2}_{\mathcal{M}_{eq}}(\mathbb{R}^{n}_{x}\times \mathbb{R}^{n}_{v})$. This is an energy estimate for
$h_{\epsilon}(t,x,v)$
in $L^{2}(d\mu)$, with
$h_{\epsilon}(t,x,v):=f_{\epsilon}(t,x,v) / \mathcal{M}_{eq}$. It is achieved
by multiplying \ref{F-P} with $h_{\epsilon}$ and integrating in $d\mu$ to get
\begin{equation} \label{ApEs3}\frac{1}{2}  \int  h^{2}_{\epsilon}(t,x,v) \, d\mu +
\frac{1}{\epsilon^{2}} \int_{0}^{t}\!\!\! \int \Big| G^{1/2}(x)
\nabla_{v} h_{\epsilon}(s,x,v) \Big|^{2}  \, d\mu \, ds=
\frac{1}{2} \int h^{2}_{\epsilon}(0,x,v) \, d\mu .\end{equation}

To simplify the analysis, we consider the basic assumption
$\inf \, V(x)> - \infty$. Then, a priori
estimate \ref{ApEs3} gives the following two bounds,
\begin{equation} \label{Es1} \int \rho^{2}_{\epsilon} \, dx < \infty ,
\qquad \iint \tilde{g}^{2}_{\epsilon} \mathcal{M}(v) \, dv \, dx
< \infty \qquad \forall t\geq 0 . \end{equation}
For the first bound in \ref{Es1} we used a simple Jensen inequality on
the $L^{2}(d\mu)$ estimate for $h_{\epsilon}$.
We also have (as a result of \ref{ApEs3}) the energy bound,
\begin{equation} \label{Es2} \frac{1}{\epsilon^2}\int_{0}^{T} \!\!\!\! \iint
|G^{1/2}\nabla_{v}\tilde{g}_{\epsilon}|^{2} \mathcal{M}(v)
 \, dv \, dx \,ds < \infty \quad \text{for any} \quad T>0.\end{equation}
Based on \ref{Es1} \& \ref{Es2}, and after picking a sequence $\epsilon_{i}\to 0$, we can extract
a subsequence which without loss of generality we still call $\epsilon_{i}$ so that
the convergences in the statement of the lemma  hold.
\end{proof}

It is important to comment that we want something stronger than just $\rho_{\epsilon}$
being weakly compact in $L^{2}(dx) \quad \forall t \geq 0$. We actually want a
uniform (in time) type of convergence, so that we don't have
a problem when we later pass to the limit in integrals of
time. For this reason, we prove that $\rho_{\epsilon}$
is compact in $C([0,T],w-L^{2}(dx))$ in the lemma that follows.

\begin{lemma} Under the assumptions of Theorem 1,
$\rho_{\epsilon}$ is compact in $C([0,T],\text{w}-L^{2}(dx))$ i.e.
\begin{equation*} \rho_{\epsilon} \rightharpoonup \rho \quad \text{in}
\quad C([0,T],\text{w}-L^{2}(dx)). \end{equation*}
\end{lemma}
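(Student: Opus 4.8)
The plan is to upgrade the pointwise-in-time weak convergence of $\rho_\epsilon$ from Lemma 2 to convergence in $C([0,T], \text{w}-L^2(dx))$ by means of a standard Arzelà–Ascoli argument in the weak topology. Recall that $C([0,T], \text{w}-L^2(dx))$ consists of maps $t \mapsto \rho(t,\cdot)$ that are continuous when $L^2(dx)$ carries its weak topology; by a classical criterion (see e.g. Lions' or Simon's compactness results), a family $\{\rho_\epsilon\}$ is relatively compact in this space provided (a) $\rho_\epsilon(t,\cdot)$ is bounded in $L^2(dx)$ uniformly in $t \in [0,T]$ and $\epsilon>0$, and (b) for each fixed test function $\psi \in C_c^\infty(\mathbb{R}^n_x)$ (a dense set in $L^2(dx)$), the family of real-valued functions $t \mapsto \langle \rho_\epsilon(t,\cdot), \psi\rangle$ is equicontinuous on $[0,T]$. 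Property (a) is already in hand: estimate \ref{Es1} gives $\int \rho_\epsilon^2\,dx < \infty$ uniformly, since the right-hand side of \ref{ApEs3} is bounded uniformly in $\epsilon$ by hypothesis. So the real work is establishing the time-equicontinuity (b).

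For (b) I would test the hydrodynamic equation \ref{HE} against $\psi(x) \in C_c^\infty(\mathbb{R}^n_x)$. Integrating \ref{HE} against $\psi$ over $[t_1,t_2]$ and moving the $x$-derivative onto $\psi$ gives
\begin{equation*}
\langle \rho_\epsilon(t_2,\cdot) - \rho_\epsilon(t_1,\cdot), \psi \rangle
= \frac{1}{\epsilon}\int_{t_1}^{t_2}\!\!\iint \mathcal{M}(v)\,\nabla_x\psi(x)\cdot \nabla_v \tilde{g}_\epsilon \, dv\,dx\,ds.
\end{equation*}
The key point is that the factor $1/\epsilon$ is compensated by the smallness of $\nabla_v \tilde g_\epsilon$: writing $\mathcal{M}\nabla_v\tilde g_\epsilon = G^{-1/2}(x)\,\big(G^{1/2}(x)\mathcal{M}^{1/2}\nabla_v\tilde g_\epsilon\big)\mathcal{M}^{1/2}$ — or more cleanly, using that on $\operatorname{supp}\psi$ the tensor $G^{-1/2}$ is controlled (here we invoke $G^{-1} \in L^1_{loc}$, hence $G^{-1/2}$ is locally in $L^2$, or more simply we localize and use the local bounds) — and applying Cauchy–Schwarz in $(s,x,v)$, we bound the right-hand side by
\begin{equation*}
\frac{C_\psi}{\epsilon}\,|t_2-t_1|^{1/2}\left(\int_0^T\!\!\iint |G^{1/2}\nabla_v\tilde g_\epsilon|^2 \mathcal{M}(v)\,dv\,dx\,ds\right)^{1/2}
\leq C_\psi\, C\, |t_2-t_1|^{1/2},
\end{equation*}
where in the last step we used precisely the energy bound \ref{Es2}, which supplies a uniform-in-$\epsilon$ bound on $\tfrac{1}{\epsilon}\|G^{1/2}\nabla_v\tilde g_\epsilon\|_{L^2(\mathcal{M}\,dv\,dx\,ds)}$. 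This is a uniform $\tfrac12$-Hölder modulus of continuity for $t \mapsto \langle \rho_\epsilon(t,\cdot),\psi\rangle$, independent of $\epsilon$ — exactly the equicontinuity needed for (b).

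With (a) and (b) established, Arzelà–Ascoli (in the form appropriate to maps into a metrizable-on-bounded-sets weak topology) yields a subsequence converging in $C([0,T],\text{w}-L^2(dx))$ to some limit; since Lemma 2 already identifies the pointwise weak-$L^2$ limit as $\rho$, the full limit must be $\rho$, and a standard subsequence-uniqueness argument promotes this to convergence of the whole family $\rho_\epsilon$. The main obstacle I anticipate is the correct handling of the localization and the anisotropic tensor $G$ in the commutator estimate: one must be careful that the $1/\epsilon$ genuinely cancels, i.e. that the "flux" term $\int \mathcal{M}\nabla_v\tilde g_\epsilon\,dv$ is $O(\epsilon)$ tested against smooth compactly supported functions, which relies on pairing the $G^{1/2}$-weighted gradient bound \ref{Es2} against the local integrability of $G^{-1/2}$ on $\operatorname{supp}\nabla_x\psi$ (guaranteed by $G^{-1}\in L^1_{loc}$). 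Everything else is routine functional analysis.
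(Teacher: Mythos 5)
Your proposal is correct and follows essentially the same route as the paper: equicontinuity of $t\mapsto\int\phi\,\rho_{\epsilon}\,dx$ is obtained from the weak form of the hydrodynamic equation by pairing $G^{-1/2}\nabla_{x}\phi$ (finite by $G^{-1}\in L^{1}_{loc}$) against $\frac{1}{\epsilon}G^{1/2}\nabla_{v}\tilde{g}_{\epsilon}$ via Cauchy--Schwarz and the dissipation bound, yielding the uniform $|t_{2}-t_{1}|^{1/2}$ modulus, after which Arzel\`a--Ascoli plus a density/diagonal argument over test functions gives compactness in $C([0,T],\text{w}-L^{2}(dx))$. No substantive differences from the paper's argument.
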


\begin{proof}
Consider the functional $H(t)=\int \phi(x) \rho_{\epsilon}(t,x) dx$ ,
$0<t<T$, for a fixed $T>0$ and $\phi \in C_{c}^{\infty}(\mathbb{R}_{x}^{n})$.
$H(t)$ can be proven to be pointwise finite for any $0<t<T$, using the
Cauchy-Schwartz inequality and always assuming finite initial energy.

Now, if we consider $t_{1},t_{2}>0$ such that $0 \leq t_{1} \leq t_{2} \leq T$, we have
\begin{align*}& H(t_{2})-H(t_{1}) =\int
\phi(x) \rho_{\epsilon}(t,x)\Big|_{t=t_{1}}^{t=t_{2}} \, dx \qquad (\text{Use weak form of
\eqref{HE}})\\
&=\frac{1}{\epsilon} \int_{t_{1}}^{t_{2}}\!\!\!\! \iint
\nabla_{x}\phi(x)\cdot \nabla_{v}\tilde{g}_{\epsilon}
\mathcal{M}(v) \, dv \, dx \, ds \\
&\leq \!\! \left( \int_{t_{1}}^{t_{2}} \!\!\!\! \iint |G^{-1/2}
\nabla_{x}\phi(x) |^{2} \mathcal{M}(v) \, dv  dx  ds \! \right)
^{\frac{1}{2}} \!\!\! \left( \int_{t_{1}}^{t_{2}} \!\!\!\! \iint \frac{|G^{1/2}
\nabla_{v}\tilde{g}_{\epsilon}|^{2}}{\epsilon^{2}} \mathcal{M}(v)
\, dv  dx ds \! \right)^{\frac{1}{2}} \\ & \leq (t_{2}-t_{1})^{\frac{1}{2}}
\left( \int |G^{-1/2}\nabla_{x}\phi(x)|^{2}
dx \right)^{\frac{1}{2}} \left( \int_{t_{1}}^{t_{2}}\!\!\!\!
\iint\frac{|G^{1/2}\nabla_{v}\tilde{g}_{\epsilon}|^{2}}
{\epsilon^{2}} \mathcal{M}(v) \, dv \, dx \, ds \right)^{\frac{1}{2}}
\\ & \leq C(t_{2}-t_{1})^{\frac{1}{2}}. \end{align*}

The Arzel\'a-Ascoli theorem states that pointwise boundedness
and equicontinuity suffice to show that the family $\int \phi(x) \rho_{\epsilon}(t,x)\, dx$
is  compact in $C([0,T])$ for a given function $\phi \in
C^{\infty}_{c}(\mathbb{R}^{n}_{x})$. Notice that condition
$G^{-1}(x)\in (L^{1}_{loc}(\mathbb{R}^{n}_{x}))^{n\times n}$ is important so
that the first integral is finite.

Next, we use a standard density argument to show that
$\int \phi(x) \rho_{\epsilon}(t,x)\, dx$ is compact in $C([0,T])$ for $\phi \in C_{c}(\mathbb{R}^{n}_{x})$ . Since
$C_{c}(\mathbb{R}^{n}_{x})$ is now a separable space, separability will allow us to make use of Cantor's
diagonal argument and extract a subsequence $\rho_{\epsilon_{j}}$ so that
\begin{equation*} \int \phi(x)\rho_{\epsilon_{j}}(t,x) \, dx
\to \int \phi(x)\rho(t,x) \, dx  \qquad  \text{as} \quad j \to \infty, \end{equation*}
for any $\phi$ in a countable subset of $C_{c}(\mathbb{R}^{n}_{x})$, and uniformly on $[0,T]$. This last
convergence can be extended to any $\phi \in C_{c}(\mathbb{R}^{n}_{x})$ again by use of a density argument.

We close by approximating any function $\phi \in L^{2}(\mathbb{R}^{n}_{x})$ by a sequence
$\phi_{m} \in C_{c}(\mathbb{R}^{n}_{x})$, so that $\phi_{m} \to \phi$
a.e. and $\|\phi_{m}-\phi\|_{L^{2}}\to 0$. This
way, we show
\begin{equation*} \int \rho_{\epsilon}(t,x)(\phi(x)-\phi_{m}(x))\, dx \to 0
\qquad \text{as} \quad m \to 0,\end{equation*}
uniformly in $\epsilon >0$ and $[0,T]$. This yields that $\rho \in L^{2}(\mathbb{R}^{n}_{x})$ and that
$\rho_{\epsilon}$ is compact in $C([0,T],w-L^{2}(\mathbb{R}^{n}_{x}))$. (see \cite{Go}).
\end{proof}

\subsection{Passage to the limit}

Now that weak compactness of $\rho_{\epsilon}$ has been established uniformly in $[0,T]$,
we can proceed with the derivation of an equation for the
deviation $\tilde{g}_{\epsilon}$, i.e.
\begin{align} \label{RemEq}\epsilon \partial_{t} \tilde{g}_{\epsilon} -\nabla_{x} \cdot \int
\mathcal{M}\nabla_{v}\tilde{g}_{\epsilon} \, dv &+ v \cdot \left(\nabla_{x}(\rho_{\epsilon}+\tilde{g}_{\epsilon})
+\nabla V(x)(\rho_{\epsilon}+\tilde{g}_{\epsilon})\right)
\\ \nonumber &-\nabla V(x) \cdot \nabla_{v}\tilde{g}_{\epsilon}=\frac{1}{\epsilon} \frac{1}{\mathcal{M}}
\nabla_{v}\cdot(\mathcal{M}G(x)\nabla_{v}\tilde{g}_{\epsilon})
. \end{align}

A mild solution of \ref{RemEq} will be in $C(\mathbb{R}_{+},
\mathcal{D}'(\mathbb{R}^{n}_{x}\times \mathbb{R}^{n}_{v}))$. The
weak formulation is given by the expression
\begin{align} \nonumber &\epsilon  \iint \mathcal{M}(v) \varphi
\left(\tilde{g}_{\epsilon}(t_{2})-\tilde{g}_{\epsilon}(t_{1})\right) \, dv dx
 \\  \nonumber & \qquad +\int_{t_{1}}^{t_{2}}\!\!\!\! \iint \mathcal{M}(v) \nabla_{x}\varphi \cdot \left( \int
\mathcal{M}(v') \nabla_{v'}\tilde{g}_{\epsilon}\, dv'\right) \, dv  dx  ds \\
 \nonumber & \qquad \qquad +  \int_{t_{1}}^{t_{2}} \!\!\!\! \iint
 \mathcal{M}(v) v \cdot \left( -\nabla_{x} \varphi \,
 \rho_{\epsilon}+
\varphi \nabla V(x) \, \rho_{\epsilon}\right) \, dv  dx  ds
\\  \nonumber & \qquad \qquad \qquad  + \int_{t_{1}}^{t_{2}}\!\!\!\!
\iint \mathcal{M}(v)  \left( -\nabla_{x} \varphi \cdot
\nabla_{v}\tilde{g}_{\epsilon}+
\varphi \nabla V(x) \cdot \nabla_{v}\tilde{g}_{\epsilon}\right) \, dv  dx  ds
\\\label{WeakFP} &-\!\! \int_{t_{1}}^{t_{2}} \!\!\!\! \iint \mathcal{M}(v) \varphi \nabla V(x)
\cdot \nabla_{v} \tilde{g}_{\epsilon} \, dv  dx  ds
=- \frac{1}{\epsilon} \int_{t_{1}}^{t_{2}} \!\!\!\! \iint
\mathcal{M}(v) \nabla_{v}\varphi \cdot G \nabla_{v}\tilde{g}_{\epsilon} \, dv  dx  ds ,\end{align}
where $\varphi(x,v) \in C^{\infty}_{c}(\mathbb{R}^{n}_{x}\times \mathbb{R}^{n}_{v})$.
In the lemma that follows, we show what happens when we let $\epsilon \to 0$ in \ref{WeakFP}.

\begin{lemma}
Under the assumptions of Theorem 1, in the limit $\epsilon \to 0$
the limiting functions $\rho$ and $J$ (from Lemma 1) satisfy
\begin{align} \label{L-RemEq}\int_{t_{1}}^{t_{2}}\!\!\!\! \iint
\mathcal{M}(v)v & \cdot (-\nabla_{x}\varphi \, \rho + \varphi \nabla V(x) \, \rho) \, dv \,dx \, ds \\ \nonumber &=
-\int_{t_{1}}^{t_{2}}\!\!\!\! \iint \mathcal{M}(v)\nabla_{v}\varphi \cdot G^{1/2} J \, dv\, dx\,  ds
\qquad \forall \varphi \in C^{\infty}_{c}(\mathbb{R}^{n}_{x}\times \mathbb{R}^{n}_{v}) .\end{align}
\end{lemma}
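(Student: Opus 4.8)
The plan is to pass to the limit $\epsilon\to0$ term by term in the weak formulation \eqref{WeakFP} and in its right-hand side, showing that exactly one left-hand term — the one carrying the hydrodynamic density $\rho_\epsilon$ — survives while all the others vanish, and that the right-hand side converges to the $G^{1/2}J$ term. The surviving term will converge thanks to the uniform-in-time weak-$L^2$ compactness of $\rho_\epsilon$ from Lemma~2, the right-hand side thanks to the weak convergence $\tfrac1\epsilon G^{1/2}\nabla_v\tilde g_\epsilon\rightharpoonup J$ from Lemma~1, and the vanishing terms will be controlled by the a priori bounds \eqref{Es1}, \eqref{Es2} together with the local integrability hypotheses on $G$, $G^{-1}$ and $\nabla V$.

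For the vanishing terms: the first term $\epsilon\iint\mathcal{M}(v)\varphi(\tilde g_\epsilon(t_2)-\tilde g_\epsilon(t_1))\,dv\,dx$ is $O(\epsilon)$ by Cauchy--Schwarz, since $\|\tilde g_\epsilon(t,\cdot,\cdot)\|_{L^2(\mathcal{M}\,dv\,dx)}$ is bounded uniformly in $\epsilon$ and $t$ by \eqref{Es1} and $\varphi\in C^\infty_c$. All remaining lower-order terms carry a factor $\nabla_v\tilde g_\epsilon$ (the $\int\mathcal{M}(v')\nabla_{v'}\tilde g_\epsilon\,dv'$ term, and the $\nabla_x\varphi\cdot\nabla_v\tilde g_\epsilon$ and $\varphi\nabla V\cdot\nabla_v\tilde g_\epsilon$ terms; note the two $\varphi\nabla V\cdot\nabla_v\tilde g_\epsilon$ contributions in \eqref{WeakFP} cancel exactly, though even kept separately each is $O(\epsilon)$). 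I would write $\nabla_v\tilde g_\epsilon=\epsilon\,G^{-1/2}(x)\big(\tfrac1\epsilon G^{1/2}(x)\nabla_v\tilde g_\epsilon\big)$, factor out the $\epsilon$, estimate the factor $\tfrac1\epsilon G^{1/2}\nabla_v\tilde g_\epsilon$ in $L^2(\mathcal{M}\,dv\,dx\,ds)$ by \eqref{Es2}, and bound the accompanying coefficient in the complementary weighted $L^2$. This is exactly where the hypotheses of Theorem~1 are used: the $\int\mathcal{M}(v')\nabla_{v'}\tilde g_\epsilon\,dv'$ term (after an inner Cauchy--Schwarz in $v'$) needs $\int|G^{-1/2}|^2\mathcal{M}\,dv=\mathrm{tr}\,G^{-1}(x)\in L^1_{loc}$; the $\nabla_x\varphi$-term again needs $G^{-1}(x)\in L^1_{loc}$ (so that $G^{-1/2}\nabla_x\varphi\in L^2$); and the $\varphi\nabla V$-terms need $G^{-1/2}\nabla V\in L^2_{loc}$. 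Each is therefore $O(\epsilon)$ and drops out.

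The term that survives is $\int_{t_1}^{t_2}\!\iint\mathcal{M}(v)\,v\cdot(-\nabla_x\varphi\,\rho_\epsilon+\varphi\nabla V(x)\,\rho_\epsilon)\,dv\,dx\,ds$. Integrating out $v$ rewrites it as $\int_{t_1}^{t_2}\!\int\psi(x)\rho_\epsilon(s,x)\,dx\,ds$ with $\psi(x)=\int\mathcal{M}(v)\big(-v\cdot\nabla_x\varphi(x,v)+(v\cdot\nabla V(x))\varphi(x,v)\big)\,dv$; since $\nabla V\in L^2_{loc}$ and $\varphi$ has compact support, $\psi\in L^2(dx)$ with compact support. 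By Lemma~2, $\int\psi\,\rho_\epsilon(s,\cdot)\,dx\to\int\psi\,\rho(s,\cdot)\,dx$ uniformly in $s\in[0,T]$, so this term converges to the left-hand side of \eqref{L-RemEq}. Finally, on the right-hand side I write $G\nabla_v\tilde g_\epsilon=G^{1/2}\big(G^{1/2}\nabla_v\tilde g_\epsilon\big)$, so it equals $-\int_{t_1}^{t_2}\!\iint\mathcal{M}(v)\,(G^{1/2}\nabla_v\varphi)\cdot\big(\tfrac1\epsilon G^{1/2}\nabla_v\tilde g_\epsilon\big)\,dv\,dx\,ds$; the test field $G^{1/2}\nabla_v\varphi$ lies in $L^2(\mathcal{M}\,dv\,dx\,ds)$ over $[t_1,t_2]$ because $G\in L^1_{loc}$ and $\nabla_v\varphi$ has compact support, so the weak convergence $\tfrac1\epsilon G^{1/2}\nabla_v\tilde g_\epsilon\rightharpoonup J$ of Lemma~1 yields convergence to $-\int_{t_1}^{t_2}\!\iint\mathcal{M}(v)\nabla_v\varphi\cdot G^{1/2}J\,dv\,dx\,ds$. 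Equating the limits of the two sides of \eqref{WeakFP} gives \eqref{L-RemEq}.

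The main obstacle is not the order-in-$\epsilon$ counting, which is routine, but the bookkeeping of which weighted $L^2$ space each coefficient must belong to: the argument goes through precisely because $G^{1/2}\nabla_v\varphi$, $\mathrm{tr}\,G^{-1}$, $G^{-1/2}\nabla_x\varphi$ and $G^{-1/2}\nabla V$ are locally integrable to the correct power, which is exactly what the hypotheses $G,G^{-1}\in(L^1_{loc})^{n\times n}$, $\nabla V\in(L^2_{loc})^n$ and $G^{-1/2}\nabla V\in(L^2_{loc})^n$ of Theorem~1 furnish. A secondary subtlety is that for the surviving $\rho_\epsilon$-term one must use convergence uniform in time — which is why Lemma~2 was proved — rather than the merely pointwise-in-$t$ weak convergence of Lemma~1, in order to pass the limit through the outer integral $\int_{t_1}^{t_2}$.
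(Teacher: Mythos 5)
Your proposal is correct and follows essentially the same route as the paper: estimate each term of the weak formulation \eqref{WeakFP} via Cauchy--Schwarz against the a priori bounds \eqref{Es1}--\eqref{Es2}, show the $\tilde g_\epsilon$-carrying terms are $O(\epsilon)$ using exactly the local integrability hypotheses on $G$, $G^{-1}$ and $G^{-1/2}\nabla V$, and pass to the limit in the surviving $\rho_\epsilon$-term via Lemma~2 and in the right-hand side via the weak convergence of $\tfrac1\epsilon G^{1/2}\nabla_v\tilde g_\epsilon$ from Lemma~1. Your observation that the two $\varphi\,\nabla V\cdot\nabla_v\tilde g_\epsilon$ contributions cancel exactly is a nice shortcut the paper does not exploit, but it changes nothing since each is separately $O(\epsilon)$.
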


\begin{proof}
We use the notation $I_{j}$ ($1 \leq j \leq 6$) for the successive integral terms that appear in the
weak formulation \ref{WeakFP} in their order of appearance.
The study of the order of magnitude for each of them reveals that in the limit $\epsilon \to 0$
only terms $I_{3} \, \& \, I_{6}$ do not vanish. In all the estimates
that follow we use \ref{Es1} \& \ref{Es2}, so that we have
\begin{align*} I_{1} & = \epsilon \iint \mathcal{M}(v) \varphi
\left(\tilde{g}_{\epsilon}(t_{2})-\tilde{g}_{\epsilon}(t_{1})\right) \, dv \, dx
\\ &\leq \epsilon \left(\iint \varphi^{2}\mathcal{M}(v) \, dv \, dx \right)^{1/2}
\left( \iint (|\tilde{g}_{\epsilon}(t_{2})|^{2}+|\tilde{g}_{\epsilon}
(t_{1})|^{2})\mathcal{M}(v) \, dv \, dx \right)^{1/2}
\\ & \leq C \epsilon =O(\epsilon) . \end{align*}
\begin{align*} I_{2} &=\int_{t_{1}}^{t_{2}}\!\!\!\!\iiint
\mathcal{M}(v)\mathcal{M}(v') \nabla_{x}\varphi(x,v,s)\cdot
\nabla_{v'}\tilde{g}_{\epsilon}(x,v',s)
\, dv' \, dv \, dx \, ds \\ & \leq \epsilon \int_{t_{1}}^{t_{2}}\!\!\!\! \iiint \mathcal{M}(v)\mathcal{M}(v')
|G^{-1/2}\nabla_{x}\varphi|
\frac{|G^{1/2}\nabla_{v'}\tilde{g}_{\epsilon}|}{\epsilon}
\, dv' \, dv \, dx \, ds \\ & \leq \epsilon \left( \int_{t_{1}}^{t_{2}} \!\!\!\! \iint \mathcal{M}(v)
|G^{-1/2}\nabla_{x}\varphi|^{2} \, dv \, dx \, ds \right)^{1/2}
\\ & \qquad \qquad \times \left(\frac{1}{\epsilon^{2}} \int_{t_{1}}^{t_{2}} \!\!\!\! \iint \mathcal{M}(v')
|G^{1/2}\nabla_{v'}\tilde{g}_{\epsilon}|^{2}\, dv' \, dx \, ds \right)^{1/2}
\!\! \leq C \epsilon=O(\epsilon).  \end{align*}
\begin{align*} I_{3} & =  \int_{t_{1}}^{t_{2}}\!\!\!\! \iint \mathcal{M}(v)
v \cdot \left( -\nabla_{x} \varphi \, \rho_{\epsilon}+
\varphi \nabla V(x) \, \rho_{\epsilon}\right) \, dv \, dx \, ds
\\& \leq \left( \int_{t_{1}}^{t_{2}} \!\!\!\! \iint \mathcal{M}(v) |v|^{2}
(|\nabla_{x}\varphi|^{2}+|\varphi \nabla V(x)|^{2})
\, dv \, dx \, ds \! \right)^{1/2} \!\!
\left( \int_{t_{1}}^{t_{2}} \!\!\!\! \int  \rho^{2}_{\epsilon} \, dx \, ds \!\right)^{1/2} \\ & \leq C
\left( \iint \mathcal{M}(v) |v|^{2}(|\nabla_{x}\varphi|^{2}+|\varphi \nabla V(x)|^{2})
\, dx \right)^{1/2}
\!\!\left(  \int_{t_{1}}^{t_{2}}\!\!\!\! \int  \rho^{2}_{\epsilon}
\, dx \, ds \right)^{1/2} \!\!\! =O(1).\end{align*}
\begin{align*}  I_{4} & = \int_{t_{1}}^{t_{2}} \!\!\!\! \iint \mathcal{M}(v)  \left( -\nabla_{x} \varphi \cdot
\nabla_{v}\tilde{g}_{\epsilon}+
\varphi \nabla V(x) \cdot \nabla_{v}\tilde{g}_{\epsilon}\right) \, dv \, dx \, ds
\\& \leq \epsilon \left( \int_{t_{1}}^{t_{2}}\!\!\!\! \iint \mathcal{M}(v)
(|G^{-1/2}\nabla_{x}\varphi|^{2}+|\varphi G^{-1/2}\nabla V(x)|^{2}) \, dv \, dx \, ds \right)^{1/2}
 \\ & \qquad \qquad \qquad  \times \left( \int_{t_{1}}^{t_{2}} \!\!\!\! \iint
 \frac{1}{\epsilon^{2}}|G^{1/2}\nabla_{v}\tilde{g}_{\epsilon}|^{2}
\mathcal{M}(v) \, dv \, dx \, ds \right)^{1/2} \!\! \leq C \epsilon =O(\epsilon). \end{align*}
\begin{align*} I_{5} & = -\int_{t_{1}}^{t_{2}} \!\!\!\! \iint \mathcal{M}(v) \varphi \nabla V(x) \cdot \nabla_{v}
\tilde{g}_{\epsilon} \, dv \, dx \, ds
\\ & \leq \epsilon \left( \int_{t_{1}}^{t_{2}} \!\!\!\! \iint \mathcal{M}(v)
|\varphi G^{-1/2}\nabla V(x)|^{2} \, dv \, dx \, ds \right)^{1/2}
 \\ & \qquad \qquad \qquad \times \left( \int_{t_{1}}^{t_{2}} \!\!\!\! \iint
 \frac{1}{\epsilon^{2}}|G^{1/2}\nabla_{v}\tilde{g}_{\epsilon}|^{2}
\mathcal{M}(v) \, dv \, dx \, ds \right)^{1/2} \!\! \leq C \epsilon =O(\epsilon). \end{align*}
\begin{align*} I_{6} & =  \frac{1}{\epsilon} \int_{t_{1}}^{t_{2}} \!\!\!\! \iint
\mathcal{M}(v) \nabla_{v}\varphi \cdot G \nabla_{v}\tilde{g}_{\epsilon} \, dv \, dx \, ds
\\ &\leq \left( \int_{t_{1}}^{t_{2}} \!\!\!\! \iint |G^{1/2}\nabla_{v} \varphi|^{2}\mathcal{M}(v)
  \, dv \, dx \, ds \right)^{1/2}
\\ & \qquad \qquad \qquad \times \left( \int_{t_{1}}^{t_{2}} \!\!\!\! \iint
\frac{1}{\epsilon^{2}}|G^{1/2}\nabla_{v}\tilde{g}_{\epsilon}|^{2}
\mathcal{M}(v) \, dv \, dx \, ds \right)^{1/2}\!\! =O(1).
\end{align*}
Now that we have established all the above bounds, we take $\epsilon \to 0$
and use the convergence results in Lemmas $1$ \& $2$ to derive \ref{L-RemEq}.
\end{proof}

\textbf{Proof of Theorem 1.}
We write the hydrodynamic equation \ref{HE} for $\rho_{\epsilon}$ in its
weak form, and take the limit $\epsilon \to 0$ to obtain
\begin{equation} \label{W-HE}\int \phi(\cdot) \rho(t,\cdot) \Big|_{t=t_{1}}^{t=t_{2}} \, dx =
\int_{t_{1}}^{t_{2}} \!\!\!\! \iint \mathcal{M}(v) \nabla_{x}\phi
\cdot G^{-1/2} J \, dv \, dx \, ds \qquad
\forall \phi \in C^{\infty}_{c}(\mathbb{R}^{n}_{x}). \end{equation}
In order to give the limiting equation for $\rho(t,x)$
we should combine \ref{L-RemEq} \& \ref{W-HE}. The two equations can be coupled for the choice of test
function $\varphi(x,v)=\nabla_{x}\phi \cdot G^{-1}v$, where $\phi \in C^{\infty}_{c}(\mathbb{R}^{n}_{x})$.
The only problem is that this function is not smooth or
compactly supported in $\mathbb{R}^{n}_{v}$, so we have to modify it slightly
(for non smooth $G^{-1}(x)$ regularization in $x$ is also needed).

We begin by taking the cut-off function $\chi_{\delta_{1}}(v)=\chi(\delta_{1}v)$, where $\chi(v) \in
C^{\infty}_{c}(\mathbb{R}^{n}_{v})$ is
a function with values $0 \leq \chi(v) \leq 1$ such that $\chi(v)=1 \,
\text{for} \, |v|\leq 1$ and $\chi(v)=0 \, \text{for} \, |v|\geq 2$.
We also consider the standard mollification function,
\begin{equation*}\eta_{\delta_{2}}(v)=\frac{1}{\delta^{n}_{2}}\eta
\left( \frac{v}{\delta_{2}} \right) \quad \text{for} \quad
\eta \in C^{\infty}_{c}(\mathbb{R}^{n}_{v}) \, \, \text{such that} \,\,  \int \eta(v) \, dv=1 . \end{equation*}
We now take the function $\varphi_{\delta_{1},\delta_{2}}
(x,v)=\left(\chi_{\delta_{1}}(v)\nabla_{x}\phi \cdot G^{-1}v \right)\star \eta_{\delta_{2}}$.
A standard result for the mollified function is that $\varphi_{\delta_{1},\delta_{2}}$ converges to $\varphi$ a.e.
in $\mathbb{R}^{n}_{v}$ (as $\delta_{1},\delta_{2} \to 0$). Obviously
$\nabla_{x}\varphi_{\delta_{1},\delta_{2}}$ converges to $\nabla_{x}\varphi$ a.e.
in $\mathbb{R}^{n}_{v}$ , since the cut-off and mollification acts only in the $v$ variable.

By the substitution of $\varphi$ with $\varphi_{\delta_{1},\delta_{2}}$  in \ref{L-RemEq}, we have
\begin{align*} \int_{t_{1}}^{t_{2}}\!\!\!\! \iint \mathcal{M}(v)v \cdot
(-\nabla_{x}\varphi_{\delta_{1},\delta_{2}} \, \rho &+
 \varphi_{\delta_{1},\delta_{2}} \nabla V(x) \, \rho) \, dv \, dx \, ds=
\\ & -\int_{t_{1}}^{t_{2}}\!\!\!\! \iint \mathcal{M}(v)\nabla_{v}\varphi_{\delta_{1},\delta_{2}}
\cdot G^{1/2} J \, dv \, dx \, ds. \end{align*}
We also have \begin{align*} \nabla_{v}\varphi_{\delta_{1},\delta_{2}}(x,v)&=\nabla_{v}
\left( (\nabla_{x}\phi \cdot G^{-1}v \, \chi_{\delta_{1}}(v))\star \eta_{\delta_{2}}\right)
\\ &=\nabla_{v}(\nabla_{x}\phi \cdot G^{-1}v \, \chi_{\delta_{1}}(v))\star \eta_{\delta_{2}} \\ &=
\left( \nabla_{x}\phi \cdot G^{-1} \chi_{\delta_{1}}(v) + \nabla_{x}\phi \cdot G^{-1}v \, \nabla_{v}
\chi_{\delta_{1}}(v) \right) \star \eta_{\delta_{2}} , \end{align*}
where we use the fact that $\nabla_{v}(f \star \eta_{\delta})=\nabla_{v}f \star \eta_{\delta}$.

A typical estimate for $\nabla_{v}\chi_{\delta_{1}}(v)$ is $|\nabla_{v}\chi_{\delta_{1}}(v)|\leq C \delta_{1}$. This
can be easily seen by the definition of $\chi_{\delta_{1}}$ and the fact that $|\nabla_{v}\chi|\leq C$ for some $C>0$,
since
$\chi \in C^{\infty}_{c}(\mathbb{R}^{n}_{v})$. This estimate, together with the computation of
$\nabla_{v}\varphi_{\delta_{1},\delta_{2}}(x,v)$ above and the
dominated convergence theorem imply that in the limit $\delta_{1}, \delta_{2} \to 0$, we
actually have that \ref{L-RemEq} holds with
$\varphi(x,v)=\nabla_{x}\phi(x) \cdot G^{-1}v$. This choice of test function allows the coupling
of \ref{W-HE} and \ref{L-RemEq} that yields
\begin{equation*}\int \phi(\cdot) \rho(t,\cdot) \Big|_{t=t_{1}}^{t=t_{2}} \, dx
=\int_{t_{1}}^{t_{2}}\!\!\!\! \int
\left( \nabla_{x} \cdot (G^{-1}\nabla_{x}\phi)+\nabla_{x}\phi
\cdot G^{-1}\nabla V(x) \right) \rho \, dx \, ds , \end{equation*}
which is the weak form of \ref{Sm}. This completes the proof of Theorem 1. $\Box$

\section{Diffusive limit via relative entropy and proof of Theorem 2}

To prove Theorem $2$, we begin with the a priori estimates that will be used later to
show that the remainder term $r_{\epsilon}$ is of order $O(\epsilon)$. The exact formula for
$r_{\epsilon}$ is given in the study of the
evolution of $H(f_{\epsilon}|\rho \mathcal{M})$ in Section 4.2.

\subsection{A priori estimates}

The following proposition contains all the estimates needed for a solution $f_{\epsilon}$.

\begin{proposition}
Assume $f_{\epsilon}$ is a solution of \ref{F-P}, with initial data satisfying \ref{ApEs2}. Let also
$0<T<\infty$. Then, the following hold:
\\ (i) $f_{\epsilon}(1+V(x)+|v|^{2}+\ln f_{\epsilon})$ is bounded in $L^{\infty}((0,T),L^{1}
(\mathbb{R}^{n}_{x}\times \mathbb{R}^{n}_{v}))$,
\\ (ii) $\frac{1}{\epsilon}G^{1/2}(x)(v \sqrt{f_{\epsilon}}+2\nabla_{v}\sqrt{f_{\epsilon}})$ is in
$L^{2}((0,T),\mathbb{R}^{n}_{x}\times \mathbb{R}^{n}_{v})$,
\\ (iii) $|v|^{2}f_{\epsilon}$ is bounded in  $L^{\infty}((0,T),L^{1}
(\mathbb{R}^{n}_{x}\times \mathbb{R}^{n}_{v}))$.
\end{proposition}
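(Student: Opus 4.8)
The backbone is the free‑energy (relative entropy) identity for \eqref{F-P} with respect to the global Gibbs state $\mathcal{M}_{eq}$. Setting $H(t):=H(f_\epsilon(t)\,|\,\mathcal{M}_{eq})=\iint f_\epsilon\ln(f_\epsilon/\mathcal{M}_{eq})\,dv\,dx$ and differentiating in $t$, the transport part of $L_\epsilon$ contributes nothing (the field $v\cdot\nabla_x-\nabla V\cdot\nabla_v$ is divergence‑free and kills the Hamiltonian $\tfrac12|v|^2+V$, hence $\mathcal{M}_{eq}$), and, using $\nabla_v f_\epsilon+vf_\epsilon=f_\epsilon\nabla_v\ln(f_\epsilon/\mathcal{M}_{eq})$ together with one integration by parts in $v$, the collision part produces a sign. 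This gives, for every $t$,
\begin{equation*}
H(f_\epsilon(t)\,|\,\mathcal{M}_{eq})+\frac{1}{\epsilon^{2}}\int_{0}^{t}\!\!\!\iint \big|G^{1/2}(x)\big(v\sqrt{f_\epsilon}+2\nabla_{v}\sqrt{f_\epsilon}\big)\big|^{2}\,dv\,dx\,ds=H(f_{\epsilon}(0)\,|\,\mathcal{M}_{eq}),
\end{equation*}
since $f_\epsilon\,|G^{1/2}\nabla_v\ln(f_\epsilon/\mathcal{M}_{eq})|^{2}=|G^{1/2}(2\nabla_v\sqrt{f_\epsilon}+v\sqrt{f_\epsilon})|^{2}$. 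First I would note that the right‑hand side is bounded uniformly in $\epsilon$: because $\mathcal{M}_{eq}=e^{-V}\mathcal{M}/Z$ and the total mass is $1$, $H(f_\epsilon(0)|\mathcal{M}_{eq})=\iint f_\epsilon(0)\big(\ln f_\epsilon(0)+V+\tfrac12|v|^{2}\big)\,dv\,dx+\ln Z$, which is dominated by the left‑hand side of \eqref{ApEs2} plus $\ln Z$ (use $\tfrac12|v|^{2}\le|v|^{2}$). Hence $\sup_{t\ge0}H(f_\epsilon(t)|\mathcal{M}_{eq})<\infty$ uniformly in $\epsilon$, and the second term on the left, being $\le H(f_\epsilon(0)|\mathcal{M}_{eq})$, is exactly statement (ii).

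For (i) the only non‑trivial ingredient is $\iint f_\epsilon|\ln f_\epsilon|$, since $f_\epsilon\ln f_\epsilon$ changes sign. I would invoke the elementary inequality $\iint f|\ln f|\le\iint f\ln f+2\iint fW+\tfrac2e\iint e^{-W}$, valid for any probability density $f$ and any weight $W\ge0$ with $e^{-W}\in L^{1}$, with the choice $W=(V-\inf V)+\tfrac12|v|^{2}$ (so that $\iint e^{-W}=e^{\inf V}(2\pi)^{n/2}\|e^{-V}\|_{L^{1}}<\infty$; here $\inf V>-\infty$ is used). Combined with the bound $\iint f_\epsilon\ln f_\epsilon=H-\iint f_\epsilon(V+\tfrac12|v|^{2})-\ln Z$ from above (and $-\iint f_\epsilon V\le-\inf V$), this reduces (i) to the moment bounds $\sup_{t\le T}\iint f_\epsilon(V+|v|^{2})\,dv\,dx<\infty$, which also yield (iii).

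The moment estimates are the technical core. I would test \eqref{F-P} against $V(x)$ and against $|v|^{2}$ separately; the key structural point is that the $O(1/\epsilon)$ transport terms, which are $\pm\tfrac1\epsilon\iint(v\cdot\nabla V)f_\epsilon$, cancel upon adding the two, so that after one integration by parts in $v$ (and using that $V$ and $G$ do not depend on $v$)
\begin{equation*}
\frac{d}{dt}\iint\Big(V+\tfrac12|v|^{2}\Big)f_\epsilon\,dv\,dx=\frac{1}{\epsilon^{2}}\iint\big(\mathrm{tr}\,G(x)-v\cdot G(x)v\big)f_\epsilon\,dv\,dx ,
\end{equation*}
the $V$‑moment identity itself being integrated by parts with the help of the bounds in \eqref{A1}. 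The crude estimate $\mathrm{tr}\,G\le n/\lambda$ (from $G^{-1}\ge\lambda I$) together with $v\cdot Gv\ge0$ already gives $\iint(V+\tfrac12|v|^{2})f_\epsilon\in L^\infty(0,T)$ for each fixed $\epsilon$, hence the qualitative content of (iii) (and then $\iint f_\epsilon V\le\iint(V+\tfrac12|v|^2)f_\epsilon$ and $\iint f_\epsilon|v|^{2}\le 2\iint(V+\tfrac12|v|^2)f_\epsilon-2\inf V$). The delicate part is making this bound uniform in $\epsilon$: one must not estimate $\mathrm{tr}\,G-v\cdot Gv$ by its supremum, but instead rewrite the integrand as $\nabla_{v}\cdot(Gv\,\mathcal{M})\cdot(f_\epsilon/\mathcal{M})$ and integrate by parts, which turns the right‑hand side into $-\tfrac1{\epsilon^2}\iint(G^{1/2}v\sqrt{f_\epsilon})\cdot(G^{1/2}(v\sqrt{f_\epsilon}+2\nabla_v\sqrt{f_\epsilon}))\,dv\,dx$ — a pairing of $v\sqrt{f_\epsilon}$ against the dissipation density that vanishes at a local Maxwellian. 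Combining an algebraic identity bounding $\iint v\cdot Gv\,f_\epsilon$ by the dissipation plus a constant (again via $\mathrm{tr}\,G\le n/\lambda$), a Cauchy–Schwarz/Young splitting against the dissipation density, and the time‑integrated bound $\tfrac1{\epsilon^{2}}\int_0^T\!\iint|G^{1/2}(v\sqrt{f_\epsilon}+2\nabla_v\sqrt{f_\epsilon})|^{2}\le H(f_\epsilon(0)|\mathcal{M}_{eq})$ from the entropy identity, one closes the estimate; this is the step I expect to be the main obstacle, since the $1/\epsilon^{2}$ prefactor must be absorbed by the smallness of the off‑equilibrium deviation. Everything else — passing from the weak formulation of $f_\epsilon$ (in the class $X$ of Proposition 1) to these differential identities, justifying the integrations by parts, and checking finiteness of the initial moments from \eqref{ApEs2} — is routine.
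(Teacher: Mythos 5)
Your treatment of (ii) is exactly the paper's: the free-energy/relative-entropy dissipation identity
$\mathcal{E}(f_{\epsilon}(T))+\frac{1}{\epsilon^{2}}\int_{0}^{T}\!\!\iint|d_{\epsilon}|^{2}=\mathcal{E}(f_{\epsilon}(0))$
with $d_{\epsilon}=G^{1/2}(v\sqrt{f_{\epsilon}}+2\nabla_{v}\sqrt{f_{\epsilon}})$, together with the observation that $\mathcal{E}$ differs from $H(\cdot|\mathcal{M}_{eq})$ by $\ln Z$ and is therefore bounded below, so the dissipation term is controlled by \eqref{ApEs2}. Your control of the negative part of $f\ln f$ in (i) is also standard and fine. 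The problem is (iii), and since your (i) leans on (iii), the gap propagates.

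For (iii) you test the equation against $V$ and $|v|^{2}$ and arrive at $\frac{d}{dt}\iint(V+\tfrac12|v|^{2})f_{\epsilon}=\frac{1}{\epsilon^{2}}\iint(\mathrm{tr}\,G-v\cdot Gv)f_{\epsilon}=-\frac{1}{\epsilon^{2}}\iint(G^{1/2}v\sqrt{f_{\epsilon}})\cdot d_{\epsilon}$. This identity is correct, but the closing step you sketch does not work uniformly in $\epsilon$: after Cauchy--Schwarz the right-hand side is bounded by $\frac{1}{\epsilon^{2}}\big(\iint v\cdot Gv\,f_{\epsilon}\big)^{1/2}\big(\iint|d_{\epsilon}|^{2}\big)^{1/2}$, and even granting a pointwise-in-time bound on the first factor, the time-integrated dissipation bound only gives $\int_{0}^{T}\big(\iint|d_{\epsilon}|^{2}\big)^{1/2}dt\leq T^{1/2}\big(\int_{0}^{T}\iint|d_{\epsilon}|^{2}\big)^{1/2}=O(\epsilon)$, leaving an uncancelled factor $O(1/\epsilon)$ in the moment estimate. (Splitting $G^{1/2}v\sqrt{f_{\epsilon}}=d_{\epsilon}-2G^{1/2}\nabla_{v}\sqrt{f_{\epsilon}}$ does not help: the cross term produces $+\frac{4}{\epsilon^{2}}\iint|G^{1/2}\nabla_{v}\sqrt{f_{\epsilon}}|^{2}$, which is not controlled pointwise in time.) You correctly identify this as the main obstacle, but you do not resolve it, and I do not see how to resolve it along these lines. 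Your auxiliary bound on $\iint v\cdot Gv\,f_{\epsilon}$ also cannot yield $\iint|v|^{2}f_{\epsilon}$, since \eqref{A1} only gives $G\leq\lambda^{-1}\Id$ from above, not from below; moreover Proposition 2 is meant to hold without invoking \eqref{A1} at all.

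The paper obtains (iii) with no moment equation whatsoever: since $\mathcal{M}_{eq}$ is Gaussian in $v$, the Fenchel--Young inequality $ab\leq h(a)+h^{*}(b)$ with $h(z)=z\log z$, $h^{*}(z)=e^{z-1}$, applied to $a=f_{\epsilon}/\mathcal{M}_{eq}$ and $b=|v|^{2}/4$ and integrated against $\mathcal{M}_{eq}$, gives
\begin{equation*}
\frac{1}{4}\iint |v|^{2}f_{\epsilon}\,dv\,dx\;\leq\;\iint f_{\epsilon}\log\frac{f_{\epsilon}}{\mathcal{M}_{eq}}\,dv\,dx\;+\;\iint e^{\frac{|v|^{2}}{4}-1}\mathcal{M}_{eq}\,dv\,dx,
\end{equation*}
where the last integral is finite because $e^{|v|^{2}/4}\mathcal{M}(v)$ is integrable and $e^{-V}\in L^{1}$, and the first term is uniformly bounded by the very entropy estimate you already established for (ii). This is pointwise in time, uniform in $\epsilon$, and uses no structure of $G$. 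You should replace your step (iii) with this argument; (i) then follows as you indicate.
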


\begin{proof}
(i) We introduce the free energy associated with the F-P equation \ref{F-P},
\begin{equation*} \mathcal{E}(f_{\epsilon}):= \iint f_{\epsilon}
\left( \ln{f_{\epsilon}} +\frac{|v|^{2}}{2}+V(x) \right)\, dv \, dx .\end{equation*}
The free energy is dissipated since
\begin{equation*} \frac{d}{dt}\mathcal{E}(f_{\epsilon})=
-\frac{1}{\epsilon^{2}}\iint |d_{\epsilon}|^{2} \, dv \, dx , \end{equation*}
where \begin{equation*} d_{\epsilon}= G^{1/2}(x)(v \sqrt{f_{\epsilon}}
+2\nabla_{v}\sqrt{f_{\epsilon}})=2 \sqrt{\mathcal{M}} G^{1/2}(x)\nabla_{v}
\sqrt{\frac{f_{\epsilon}}{\mathcal{M}}}. \end{equation*}
The dissipation of energy implies
\begin{equation} \label{Es3} \mathcal{E}(f_{\epsilon}(T,\cdot,\cdot)) +\frac{1}{\epsilon^{2}}
\int_{0}^{T}\!\!\!\! \iint |d_{\epsilon}|^{2} \, dv \, dx \, ds
= \mathcal{E}(f_{\epsilon}(0,\cdot,\cdot)) .\end{equation}
(ii) It also follows from \eqref{Es3} that $d_{\epsilon}$ is of order
$O(\epsilon)$ in $L^{2}$ i.e.
\begin{equation*} \int_{0}^{T}\!\!\!\! \iint |d_{\epsilon}|^{2}
\, dv \, dx \, dt \leq C \epsilon^{2} , \qquad T>0 .\end{equation*}
(iii) Let us now establish a bound for $\iint |v|^{2} f_{\epsilon}\, dv \, dx$
in $L^{\infty}(0,T)$.
This bound (uniform in time) is a straightforward consequence
of the elementary Frenchel-Young inequality
\begin{equation*} ab \leq h(a)+h^{*}(b),\end{equation*}
where $h$,$h^{*}$ are a Young's convex pair ($h^{*}$ is explicitly computed
by the Legendre transform of the convex function $h$). Here, we use $h(z)=z\log z$
and $h^{*}(z)=e^{z-1}$, i.e.
\begin{equation*} \frac{1}{4}\iint |v|^{2}f_{\epsilon}\, dv \, dx \leq
\iint f_{\epsilon}\log \frac{f_{\epsilon}}{\mathcal{M}_{eq}} \, dv \, dx + \iint
e^{\frac{|v|^{2}}{4}-1} \mathcal{M}_{eq}\, dv \, dx .\end{equation*}
This implies that
\begin{equation*} \iint |v|^{2}f_{\epsilon}(t,v,x) \, dv \, dx \leq C
\qquad \text{for some} \, \, C>0, \, \, \, \forall t \in [0,T],\end{equation*}
since $e^{-V(x)}\in L^{1}(\mathbb{R}^{n}_{x})$ and the entropy integral is
bounded by the a priori estimate.
\end{proof}

\subsection{Evolution of the relative entropy}

We now prove the following result for the evolution of the
relative entropy.
\begin{lemma}
Assume a sufficiently regular solution $f_{\epsilon}$ of equation \ref{F-P}, with
initial data $f_{\epsilon}(0,\cdot, \cdot)$
satisfying \eqref{ApEs2}. It is shown
that \begin{equation}\label{In1} H(f_{\epsilon}(t,\cdot,\cdot)|\rho(t,\cdot) \mathcal{M})
\leq  H(f_{\epsilon}(0,\cdot,\cdot)|\rho(0,\cdot) \mathcal{M})
+\int_{0}^{t}r_{\epsilon}(s) \, ds , \end{equation}
for a remainder term $r_{\epsilon}$ given explicitly by
\begin{equation} \label{RemEq2} r_{\epsilon}(t)=- \int \left( \epsilon \partial_{t}J_{\epsilon}
+ \nabla_{x}\cdot \left( \int \mathcal{M} \nabla_{v}
\left( \frac{f_{\epsilon}}{\mathcal{M}}\right)
\otimes v \, dv \right)\right)
\cdot G^{-1} \left( \frac{\nabla \rho}{\rho}
+\nabla V(x)\right)\, dx . \end{equation}
\end{lemma}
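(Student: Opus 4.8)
The plan is to compute the time derivative of the relative entropy $H(f_\epsilon(t)|\rho(t)\mathcal{M})$ directly, using the equations satisfied by $f_\epsilon$ and by $\rho$, and then to reorganize the resulting expression so that it splits into a manifestly non-positive dissipation term plus the remainder $r_\epsilon$ of \eqref{RemEq2}; integrating in time over $[0,t]$ and discarding the non-positive piece then yields \eqref{In1}. Concretely, I would start from
\[
H(f_\epsilon|\rho\mathcal{M})=\iint f_\epsilon\log\frac{f_\epsilon}{\rho\mathcal{M}}\,dv\,dx
=\iint f_\epsilon\log\frac{f_\epsilon}{\mathcal{M}_{eq}}\,dv\,dx
-\iint f_\epsilon\log\frac{\rho}{e^{-V}}\,dv\,dx+\text{const},
\]
so that $\frac{d}{dt}H=\frac{d}{dt}\mathcal{E}(f_\epsilon)-\frac{d}{dt}\iint f_\epsilon\log\frac{\rho}{e^{-V}}\,dv\,dx$ up to the normalization. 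The first term is handled by Proposition 2(i): it equals $-\frac{1}{\epsilon^2}\iint|d_\epsilon|^2\,dv\,dx\le 0$, with $d_\epsilon=2\sqrt{\mathcal{M}}\,G^{1/2}\nabla_v\sqrt{f_\epsilon/\mathcal{M}}$.

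For the second term, write $\psi(t,x):=\log(\rho/e^{-V})$ so that $\iint f_\epsilon\log(\rho/e^{-V})\,dv\,dx=\int\rho_\epsilon\,\psi\,dx$. Differentiating, $\frac{d}{dt}\int\rho_\epsilon\psi\,dx=\int(\partial_t\rho_\epsilon)\psi\,dx+\int\rho_\epsilon(\partial_t\psi)\,dx$. I would substitute the hydrodynamic equation \eqref{Eq-Rhoe}, $\partial_t\rho_\epsilon=-\frac1\epsilon\nabla_x\cdot J_\epsilon$, integrate by parts to get $\frac1\epsilon\int J_\epsilon\cdot\nabla_x\psi\,dx$, and use \eqref{Eq-Je2} (equivalently the identity following from multiplying \eqref{F-P} by $v$) to replace $\frac1\epsilon J_\epsilon$ by $-G^{-1}(\nabla_x\rho_\epsilon+\nabla V\rho_\epsilon)-\epsilon G^{-1}\partial_tJ_\epsilon-G^{-1}\nabla_x\cdot\int\mathcal{M}\nabla_v(f_\epsilon/\mathcal{M})\otimes v\,dv$. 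Noting $\nabla_x\psi=\nabla_x\rho/\rho+\nabla V$ is the limiting profile while $\nabla_x\rho_\epsilon+\nabla V\rho_\epsilon$ involves $\rho_\epsilon$, I would combine the leading $-\int G^{-1}(\nabla_x\rho_\epsilon+\nabla V\rho_\epsilon)\cdot\nabla_x\psi\,dx$ term with the $\int\rho_\epsilon\partial_t\psi\,dx$ term, using that $\rho$ itself solves the Smoluchowski equation \eqref{Sm} (so $\partial_t\psi=\frac{\partial_t\rho}{\rho}=\frac1\rho\nabla_x\cdot(G^{-1}(\nabla_x\rho+\nabla V\rho))$), to produce a perfect square. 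The two remaining contributions — $\epsilon\int\partial_tJ_\epsilon\cdot G^{-1}\nabla_x\psi\,dx$ and $\int\big(\nabla_x\cdot\int\mathcal{M}\nabla_v(f_\epsilon/\mathcal{M})\otimes v\,dv\big)\cdot G^{-1}\nabla_x\psi\,dx$ — are exactly $-r_\epsilon(t)$ after writing $\nabla_x\psi=\nabla\rho/\rho+\nabla V$.

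The more delicate point is showing that the pairing of the two "square-like" contributions is genuinely a dissipation term of the correct sign, or at least can be absorbed into $-\frac{1}{\epsilon^2}\iint|d_\epsilon|^2$ plus the remainder. The natural move is to use $\int\mathcal{M}\nabla_v(f_\epsilon/\mathcal{M})\,dv=\nabla_x\rho_\epsilon+\nabla V\rho_\epsilon$ together with the elementary inequality $f\log(f/g)-f+g\ge 0$ relating relative entropy and its dissipation, completing the square in the form $\iint\big|\tfrac{1}{\epsilon}d_\epsilon+\sqrt{f_\epsilon\mathcal{M}}\,G^{1/2}(\nabla_x\psi)\big|^2$ or similar, and checking that the cross terms reproduce precisely the derivative computation above while the unwanted square is non-negative. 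I expect the bookkeeping here — matching the $v$-moments $\int\mathcal{M}\nabla_v(f_\epsilon/\mathcal{M})\otimes v\,dv$ and $\int\mathcal{M}\nabla_v(f_\epsilon/\mathcal{M})\,dv$ against the gradient structure $G^{-1}(\nabla\rho/\rho+\nabla V)$ so that everything not of the form (non-positive) $+$ ($r_\epsilon$) cancels — to be the main obstacle; the regularity hypotheses \eqref{A1}, \eqref{A2} and the lower bound $\rho\ge ae^{-V}>0$ are what make $\psi$ and its derivatives bounded so that every integration by parts and every use of the chain rule is legitimate. Once the identity $\frac{d}{dt}H=-(\text{non-negative})+r_\epsilon(t)$ is established, dropping the non-negative term and integrating from $0$ to $t$ gives \eqref{In1}.
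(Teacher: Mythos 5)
Your overall architecture matches the paper's: decompose $H(f_{\epsilon}|\rho\mathcal{M})$ through the global equilibrium $\mathcal{M}_{eq}$, use the free-energy dissipation for the kinetic piece, and use \eqref{Eq-Rhoe}, \eqref{Sm} and the first moment equation for the hydrodynamic piece; the remainder you isolate is also the correct one, \eqref{RemEq2}. However, the plan for the crucial completion-of-the-square step would not close as written. You substitute the moment equation \eqref{Eq-Je2} into the cross term $\frac{1}{\epsilon}\int J_{\epsilon}\cdot\nabla_{x}\psi\,dx$. Carrying out that bookkeeping leaves, besides the remainder, the term $\int\rho_{\epsilon}\bigl(\frac{\nabla\rho_{\epsilon}}{\rho_{\epsilon}}+\nabla V\bigr)\cdot G^{-1}\nabla_{x}\psi\,dx$, which combined with the Smoluchowski contribution $\int\rho_{\epsilon}\bigl(\frac{\nabla\rho_{\epsilon}}{\rho_{\epsilon}}-\frac{\nabla\rho}{\rho}\bigr)\cdot G^{-1}\nabla_{x}\psi\,dx$ gives $2\int\rho_{\epsilon}\bigl(\frac{\nabla\rho_{\epsilon}}{\rho_{\epsilon}}+\nabla V\bigr)\cdot G^{-1}\nabla_{x}\psi\,dx-\int\rho_{\epsilon}\,\nabla_{x}\psi\cdot G^{-1}\nabla_{x}\psi\,dx$. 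The first of these cannot be absorbed by the kinetic dissipation: $-\frac{1}{\epsilon^{2}}\iint|d_{\epsilon}|^{2}$ controls, after Cauchy--Schwarz in $v$, only the velocity moment $J_{\epsilon}$, not the spatial gradient $\nabla\rho_{\epsilon}$. The identity you lean on here, $\int\mathcal{M}\nabla_{v}(f_{\epsilon}/\mathcal{M})\,dv=\nabla_{x}\rho_{\epsilon}+\nabla V\rho_{\epsilon}$, is false: that integral equals $\int(\nabla_{v}f_{\epsilon}+vf_{\epsilon})\,dv=J_{\epsilon}$. To convert $\nabla\rho_{\epsilon}+\nabla V\rho_{\epsilon}$ back into $J_{\epsilon}$ you would have to invoke \eqref{Eq-Je2} a second time, i.e., undo your substitution. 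A symptom of the same problem is that your remainder enters $\frac{d}{dt}H$ with sign $-r_{\epsilon}$ rather than $+r_{\epsilon}$ as in \eqref{In1}.

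The fix is the paper's ordering of the steps: leave the cross term $-\frac{1}{\epsilon}\int J_{\epsilon}\cdot\left(\frac{\nabla\rho}{\rho}+\nabla V\right)dx$ untouched; weaken the kinetic dissipation to the hydrodynamic bound $-\frac{1}{\epsilon^{2}}\int|G^{1/2}J_{\epsilon}|^{2}/\rho_{\epsilon}\,dx$ by Cauchy--Schwarz in $v$ (inequality \eqref{RelEn2} --- this is precisely why the lemma is an inequality and not an identity); and substitute the moment equation only into the Smoluchowski term, via \eqref{E8}, which expresses $\frac{\nabla\rho_{\epsilon}}{\rho_{\epsilon}}-\frac{\nabla\rho}{\rho}$ in terms of $\frac{1}{\epsilon}G\frac{J_{\epsilon}}{\rho_{\epsilon}}$, the limiting profile, and the remainder. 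Then every surviving term depends only on $J_{\epsilon}$ and $\frac{\nabla\rho}{\rho}+\nabla V$, the square $-\int\rho_{\epsilon}\left|\frac{1}{\epsilon}G^{1/2}\frac{J_{\epsilon}}{\rho_{\epsilon}}+G^{-1/2}\left(\frac{\nabla\rho}{\rho}+\nabla V\right)\right|^{2}dx$ closes exactly, and $+r_{\epsilon}$ is what is left over; integrating in time gives \eqref{In1}.
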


\begin{proof}
We start with the computation of the evolution of the
$H(\rho_{\epsilon}|\rho)$ relative entropy. This computation
becomes partly obsolete later when we perform a similar computation for $H(f_{\epsilon}|\rho \mathcal{M})$.
Nevertheless, we begin with computing $\frac{d}{dt}H(\rho_{\epsilon}|\rho)$,
especially since it contains parts important in the computation of
$\frac{d}{dt}H(f_{\epsilon}|\rho \mathcal{M})$. Hence,
\begin{align*}
&\frac{d}{dt}H(\rho_{\epsilon}|\rho)=\frac{d}{dt}\int
\rho_{\epsilon}\log{\frac{\rho_{\epsilon}}{\rho}} \, dx=
\frac{d}{dt}\int \rho_{\epsilon} \log{\rho_{\epsilon}} \, dx
-\frac{d}{dt} \int \rho_{\epsilon} \log{\rho} \, dx\\
&= \int \partial_{t}\rho_{\epsilon} (\log{\rho_{\epsilon}}+1)
\, dx -\int \partial_{t}\rho_{\epsilon} \log{\rho} \, dx
-\int \frac{\rho_{\epsilon}}{\rho} \partial_{t}\rho \, dx \qquad (\text{Use \eqref{Eq-Rhoe},\eqref{Sm}})\\
&=\frac{1}{\epsilon}\int J_{\epsilon} \cdot \frac{\nabla \rho_{\epsilon}}
{\rho_{\epsilon}} \, dx -\frac{1}{\epsilon}\int J_{\epsilon} \cdot
\frac{\nabla \rho}{\rho} \, dx -\int \frac{\rho_{\epsilon}}{\rho}
\nabla \cdot \left( G^{-1}(\nabla \rho +\nabla V(x) \rho) \right) \, dx \\
&= \frac{1}{\epsilon}\int J_{\epsilon} \cdot \left( \frac{\nabla \rho_{\epsilon}}
{\rho_{\epsilon}}-\frac{\nabla \rho}{\rho}\right)\, dx+ \int \rho \nabla
\left( \frac{\rho_{\epsilon}}{\rho}\right) \cdot G^{-1}
\left( \frac{\nabla \rho}{\rho} + \nabla V(x) \right) \, dx\\
&= \frac{1}{\epsilon}\int J_{\epsilon} \cdot \left( \frac{\nabla \rho_{\epsilon}}
{\rho_{\epsilon}}-\frac{\nabla \rho}{\rho}\right)\, dx + \int
\left( \frac{\nabla \rho_{\epsilon}}{\rho_{\epsilon}}-\frac{\nabla \rho}{\rho} \right)\cdot G^{-1}
\left( \frac{\nabla \rho}{\rho} + \nabla V(x) \right) \rho_{\epsilon}\, dx \\
&= \int \left( \frac{\nabla \rho_{\epsilon}}{\rho_{\epsilon}}-\frac{\nabla \rho}{\rho} \right) \cdot
\left( \frac{1}{\epsilon}\frac{J_{\epsilon}}{\rho_{\epsilon}}+ G^{-1}
\left( \frac{\nabla \rho}{\rho} + \nabla V(x) \right)\!\! \right) \rho_{\epsilon}\, dx \\
& = \! - \! \int \!\! G \left( \frac{1}{\epsilon}\frac{J_{\epsilon}}{\rho_{\epsilon}}+ G^{-1}
\left( \frac{\nabla \rho}{\rho} + \nabla V(x) \right)\!\! \right)
\! \cdot \! \left(\frac{1}{\epsilon} \frac{J_{\epsilon}}{\rho_{\epsilon}}+ G^{-1}
\left( \frac{\nabla \rho}{\rho} + \nabla V(x) \right)\!\! \right) \rho_{\epsilon} \, dx \\
& +r'_{\epsilon}=- \int \Big| \frac{1}{\epsilon} G^{1/2} \frac{J_{\epsilon}}{\rho_{\epsilon}} +G^{-1/2}\left(
\frac{\nabla \rho}{\rho} +\nabla V(x)\right)\Big|^{2} \rho_{\epsilon} \, dx +r'_{\epsilon}.
\end{align*}
In the second to last equality, we made use of
\begin{align} \nonumber \frac{\nabla \rho_{\epsilon}}{\rho_{\epsilon}}-\frac{\nabla \rho}{\rho}
=-\frac{1}{\epsilon} G \frac{J_{\epsilon}}{\rho_{\epsilon}}-
\frac{\nabla \rho}{\rho}- & \nabla V(x)
-\epsilon \frac{\partial_{t}J_{\epsilon}}{\rho_{\epsilon}}
\\ \label{E8} -& \frac{1}{\rho_{\epsilon}} \nabla_{x} \cdot \int \mathcal{M} \nabla_{v}
\left( \frac{f_{\epsilon}}{\mathcal{M}} \right) \otimes v \, dv ,
\end{align}
which is derived directly from \ref{Eq-Je2}.
The remainder term $r'_{\epsilon}$ equals
\begin{equation*} r'_{\epsilon}= \! - \! \int \!\! \left( \epsilon \partial_{t}J_{\epsilon} +\nabla_{x}\! \cdot \!\!
\int \mathcal{M} \nabla_{v}\left( \frac{f_{\epsilon}}{\mathcal{M}}\right) \otimes v \, dv
\right) \! \cdot \! \left( \frac{1}{\epsilon}\frac{J_{\epsilon}}{\rho_{\epsilon}}+ G^{-1}
\left( \frac{\nabla \rho}{\rho} + \nabla V(x) \right)\!\! \right)  dx .\end{equation*}
\begin{remark}
Notice that $r'_{\epsilon}$ is a remainder term that should vanish as $\epsilon \to 0$.
We do not bother with showing that $r'_{\epsilon} \to 0$ in rigorous manner, as we mainly work with
the relative entropy $H(f_{\epsilon}|\rho \mathcal{M})$.
Yet, as we remark in the end of Section 4, the computation of
$\frac{d}{dt}H(\rho_{\epsilon}|\rho)$ alone can be used to establish the convergence of $f_{\epsilon}$
that we prove in Theorem 2.
\end{remark}

At this point, we compute the evolution of $H(f_{\epsilon}| \rho \mathcal{M})$
in similar manner. To make things easier we can introduce
the global equilibrium state $\mathcal{M}_{eq}(x,v)$ in the computation that follows
\begin{align} \nonumber H(f_{\epsilon}|\rho \mathcal{M})&=\iint f_{\epsilon}
\log{f_{\epsilon}} \, dv \, dx -\iint f_{\epsilon} \log{(\rho \mathcal{M})} \, dv \, dx
\\ \nonumber &= \iint f_{\epsilon} \log{\frac{f_{\epsilon}}{\mathcal{M}_{eq}}} \, dv \, dx + \iint f_{\epsilon}
\log{\frac{\mathcal{M}_{eq}}{\rho \mathcal{M}}} \, dv \, dx
\\ \nonumber &= \iint f_{\epsilon} \log{\frac{f_{\epsilon}}{\mathcal{M}_{eq}}} \, dv \, dx +\int \rho_{\epsilon}
\log{\frac{e^{-V(x)}}{\rho}} \, dx
\\ \label{RelEn1}&=H(f_{\epsilon}|\mathcal{M}_{eq})-\int \rho_{\epsilon} \log{\rho} \, dx -\int \rho_{\epsilon}V(x) \, dx
.\end{align}
The reason we introduced $H(f_{\epsilon}|\mathcal{M}_{eq})$ is
that the term $\frac{d}{dt}H(f_{\epsilon}|\mathcal{M}_{eq})$
can be easily bounded by an integral involving only hydrodynamical variables.
Indeed, the time derivative of $H(f_{\epsilon}|\mathcal{M}_{eq})$ is
\begin{align} \label{RelEn2} \frac{d}{dt} \iint f_{\epsilon}&\log{\frac{f_{\epsilon}}{\mathcal{M}_{eq}}}  \, dv \, dx
=-\frac{1}{\epsilon^{2}} \iint
f_{\epsilon} \Big|G^{1/2}\nabla_{v}\log{\frac{f_{\epsilon}}
{\mathcal{M}}} \Big|^{2} \, dv \, dx \\ \nonumber &= -\frac{1}{\epsilon^{2}}
\iint f_{\epsilon} \Big|G^{1/2}\left( \frac{\nabla_{v}f_{\epsilon}}{f_{\epsilon}} +v \right) \Big|^{2} \, dv \, dx
\leq -\frac{1}{\epsilon^{2}}\int \frac{|G^{1/2}J_{\epsilon}|^{2}}{\rho_{\epsilon}} \, dx .\end{align}
The last inequality in \ref{RelEn2} is in fact due to H\"older,
\begin{align*} \int \frac{|G^{1/2}J_{\epsilon}|^{2}}{\rho_{\epsilon}} \, dx &= \int
\frac{\left( \int G^{1/2} \left(v+\frac{\nabla_{v}f_{\epsilon}}{f_{\epsilon}} \right)f_{\epsilon} \, dv
\right)^{2}}{\rho_{\epsilon}} \, dx \\ &
\leq  \iint \Big|G^{1/2}\left( \frac{\nabla_{v}
f_{\epsilon}}{f_{\epsilon}}+v \right) \Big|^{2} f_{\epsilon} \, dv \, dx . \end{align*}
Combining \ref{RelEn1} \& \ref{RelEn2}, we obtain
\begin{align*} \frac{d}{dt}H(f_{\epsilon}|\rho \mathcal{M})&=\frac{d}{dt}H(f_{\epsilon}|\mathcal{M}_{eq})-\frac{d}{dt}
\int \rho_{\epsilon} \log{\rho} \, dx
-\frac{d}{dt} \int \rho_{\epsilon} V(x) \, dx \\
& \leq - \frac{1}{\epsilon^{2}}\int \frac{|G^{1/2}J_{\epsilon}|^{2}}{\rho_{\epsilon}} \, dx
- \frac{d}{dt} \int \rho_{\epsilon} \log{\rho} \, dx - \frac{1}{\epsilon}
\int J_{\epsilon} \cdot \nabla V(x) \, dx .\end{align*}

The computation of $\frac{d}{dt} \int \rho_{\epsilon}\log{\rho} \, dx $
has been performed as a part of the computation
of $\frac{d}{dt}H(\rho_{\epsilon}|\rho)$
above. We thus have
\begin{align}
\nonumber \frac{d}{dt}&H(f_{\epsilon}|\rho \mathcal{M}) \leq -\frac{1}{\epsilon^{2}}\int
\frac{|G^{1/2}J_{\epsilon}|^{2}}{\rho_{\epsilon}} \, dx
-\frac{1}{\epsilon}\int J_{\epsilon} \cdot \nabla V(x) \, dx
-\frac{1}{\epsilon}\int J_{\epsilon} \cdot \frac{\nabla \rho}{\rho} \, dx \\ \nonumber
&+ \int \left( \frac{\nabla \rho_{\epsilon}}{\rho_{\epsilon}}
-\frac{\nabla \rho}{\rho}\right)\cdot G^{-1}
\left( \frac{\nabla \rho}{\rho} +\nabla V(x) \right) \rho_{\epsilon} \, dx \qquad (\text{Use \eqref{E8}}) \\
\nonumber &=-\int \frac{1}{\epsilon} J_{\epsilon} \cdot \left(\frac{1}{\epsilon} G \frac{J_{\epsilon}}
{\rho_{\epsilon}} +\frac{\nabla \rho}{\rho} +\nabla V(x) \right) \, dx \\ \nonumber
& -\int \left( \frac{1}{\epsilon} G \frac{J_{\epsilon}}{\rho_{\epsilon}} +
\frac{\nabla \rho}{\rho} +\nabla V(x) \right) \cdot G^{-1}
\left( \frac{\nabla \rho}{\rho}+\nabla V(x) \right)
\rho_{\epsilon} \, dx + r_{\epsilon} \\ \nonumber
&=-\int \! \left(  \frac{1}{\epsilon} G \frac{J_{\epsilon}}{\rho_{\epsilon}}
+\frac{\nabla \rho}{\rho} +\nabla V(x)\right) \! \cdot \!
\left( \frac{1}{\epsilon}\frac{J_{\epsilon}}{\rho_{\epsilon}}+G^{-1}
\left( \frac{\nabla \rho}{\rho}+\nabla V(x)
\right)\! \right) \rho_{\epsilon} \, dx + r_{\epsilon} \\ \label{E7}
&=-\int \Big| \frac{1}{\epsilon} G^{1/2}\frac{J_{\epsilon}}{\rho_{\epsilon}}
+G^{-1/2} \left( \frac{\nabla \rho}{\rho}+ \nabla V(x)
\right)\Big|^{2} \rho_{\epsilon} \, dx + r_{\epsilon},
\end{align}
with a remainder term $r_{\epsilon}$ given by \ref{RemEq2}.
Finally, we integrate \ref{E7} in time and \ref{In1} follows.
\end{proof}

\subsection{Control of the remainder term}

We already gave the formal computation
for $\frac{d}{dt}H(f_{\epsilon}|\rho \mathcal{M})$.
Our goal is to prove that $\int_{0}^{t} r_{\epsilon}(s)\, ds \to 0$.
The remainder term $r_{\epsilon}$ that we computed in Lemma $4$ consists of two parts $r_{1,\epsilon}$ and
$r_{2,\epsilon}$, which integrated in time are
\begin{equation*} \int_{0}^{T}r_{1,\epsilon} \, dt =-\epsilon \int_{0}^{T}\!\!\!\! \iint
\partial_{t}f_{\epsilon} \, v \cdot G^{-1}\left( \frac{\nabla \rho}{\rho}
+\nabla V(x)\right) \, dv \, dx \, dt ,\end{equation*}
\begin{equation*} \int_{0}^{T} r_{2,\epsilon} \, dt =\int_{0}^{T}\!\!\!\!
\iint \left(\mathcal{M}\, v \otimes \nabla_{v}
\left( \frac{f_{\epsilon}}{\mathcal{M}}\right)\right)
: \nabla \left( G^{-1}\left( \frac{\nabla \rho}{\rho}
+\nabla V(x)\right)\right) \, dv \, dx \, dt .\end{equation*}
Our task is to show that both integrals vanish as $\epsilon \to 0$.

In the process of controlling the two terms,
we introduce a new notation for expressions involving the
hydrodynamic variable $\rho$. Thus, we denote with
$D$ the tensor $D:=\nabla(G^{-1}
(\nabla \log{\rho}+\nabla V(x)))$, and with $E,F$ the vectors
$E:=G^{-1}(\nabla \log{\rho}+\nabla V(x))$ and
$F:=G^{-1}\nabla \partial_{t} \log{\rho}$. The easiest term to
control is
\begin{align}
\nonumber \Big|\int_{0}^{T}& r_{2,\epsilon}\, dt \Big| = \Big| \int_{0}^{T}\!\!\!\! \iint \mathcal{M}
v \otimes \nabla_{v}\left( \frac{f_{\epsilon}}{\mathcal{M}}\right) : D
\, dv \, dx \, dt \Big|
\\ \nonumber &\leq  \int_{0}^{T} \!\!\!\! \iint | \sqrt{f_{\epsilon}} \left( v \otimes G^{-1/2}(x)d_{\epsilon}\right)
: D | \, dv \, dx \, dt \\ \label{In2} & \leq C \epsilon \| D\|_{\infty} \! \left( \int_{0}^{T}\!\!\!\! \iint
\frac{|G^{-1/2}(x)d_{\epsilon}|^{2}}{\epsilon^{2}} \, dv \, dx \, dt\right)^{1/2}\!\!\!
\left( \int_{0}^{T}\!\!\!\! \iint  |v|^{2}f_{\epsilon}
\, dv \, dx \, dt \right)^{1/2}\!\!\!\!\!.
\end{align}
Finally, for the first term we have
\begin{align}
\nonumber \Big| \int_{0}^{T} & r_{1,\epsilon} \, dt  \Big| =  \Big| -\epsilon \iint
\left(f_{\epsilon}(T,v,x)-f_{\epsilon}(0,v,x)\right)v \cdot E \, dv \, dx
\\ \nonumber &+ \epsilon \int_{0}^{T}\!\!\!\! \iint f_{\epsilon} \, v \cdot F \, dv \, dx \, dt \Big|
\\ \nonumber & \leq \epsilon \| E \|_{\infty} \! \left(
\left( \iint \! f_{\epsilon}(T,x,v)|v|^{2}\, dv \, dx \right)^{1/2}\!\!\! +
\left( \iint \! f_{\epsilon}(0,x,v)|v|^{2}\, dv \, dx\right)^{1/2} \right)
\\ \label{In3} &+ \epsilon T^{1/2} \| F \|_{\infty} \! \left( \int_{0}^{T}\!\!\!\! \iint
f_{\epsilon}(s,x,v)|v|^{2}\, dv \, dx \, ds \right)^{1/2}\!\!.
\end{align}
We now show why the terms $D$, $E$, $F$ are in
$L^{\infty}([0,T],\mathbb{R}^{n}_{x})$.

\textbf{Stability estimates for the terms} $D$, $E$, $F$.

The control of terms $D,E,F$ is achieved by controlling
$h_{0}=\frac{\rho}{e^{-V(x)}}$ and its derivatives. Notice that the $L^{\infty}$ bound
on $\log \frac{\rho}{e^{-V(x)}}$ implies a bound of the
type $a e^{-V(x)} \leq \rho \leq A e^{-V(x)}$ for $a,A>0$. Such
control of $h_{0}$ is a direct consequence of the maximum principle for the Smoluchowski equation \ref{Sm}. Indeed,
if $a<h_{0}(0,x)<A$ it follows by the maximum principle that $a<h_{0}(t,x)<A$ for all $t \in [0,T]$, under the
condition that $\nabla \cdot (G^{-1}(x)\nabla V(x))<\infty$ and given that
$\sup_{0 \leq t \leq T} \limsup_{x \to \infty}| h_{0}(t,x)|\leq C_{0}$.
The control of derivatives also follows from a parabolic maximum principle as we prove in

\begin{lemma}
Let $\rho(t,x)$ be a solution to the Smoluchowski equation
\ref{Sm} with initial data $\rho(0,x)$. Assume also that conditions \ref{A1}-\ref{A3} are satisfied.
It follows that $D$, $E$, $F$ are in
$L^{\infty}([0,T],\mathbb{R}^{n}_{x})$. More precisely,
$\|D(t,.)\|_{L^{\infty}(\mathbb{R}^{n}_{x})} \leq C \|D(0,.)\|_{L^{\infty}
(\mathbb{R}^{n}_{x})}$ for $0 \leq t \leq T$, with similar estimates for $E$ and $F$.
\end{lemma}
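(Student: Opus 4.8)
The plan is to reduce the whole statement to maximum‑principle estimates for the single scalar function $h_{0}:=\rho\,e^{V}=\rho/e^{-V(x)}$. First I would observe that $E,D,F$ are algebraic expressions in $h_{0}$, its space and time derivatives, $1/h_{0}$, and the coefficients $G^{-1},\,G^{-1}\nabla V$ together with their derivatives: since $\nabla\log\rho+\nabla V=\nabla\log h_{0}$, one has $E=G^{-1}\nabla\log h_{0}$ and $D=\nabla E=\nabla(G^{-1}\nabla\log h_{0})$, while, as $V$ is independent of $t$, $\partial_{t}\log\rho=\partial_{t}h_{0}/h_{0}$, so $F=G^{-1}\nabla(\partial_{t}h_{0}/h_{0})$. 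A direct computation from \ref{Sm} shows that $h_{0}$ solves the linear uniformly parabolic equation
\begin{equation}\label{eq:h0pde}
\partial_{t}h_{0}=\nabla_{x}\cdot\!\left(G^{-1}\nabla_{x}h_{0}\right)-\left(G^{-1}\nabla V(x)\right)\cdot\nabla_{x}h_{0},
\end{equation}
whose coefficients and their derivatives up to order three are in $L^{\infty}$ by \ref{A1}. Using \eqref{eq:h0pde} to eliminate the time derivative, $F$ becomes an expression in $h_{0},\nabla h_{0},\nabla^{2}h_{0},\nabla^{3}h_{0},1/h_{0}$ and bounded coefficients. Hence it is enough to prove that $1/h_{0}$ and $\nabla^{k}h_{0}$, $0\le k\le 3$, are bounded on $[0,T]\times\mathbb{R}^{n}_{x}$ with a bound controlled by the data.

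The two‑sided bound $a\le h_{0}(t,x)\le A$ (hence $1/h_{0}\le 1/a$) is the $k=0$ case, already recorded before the lemma: \eqref{eq:h0pde} annihilates constants, \ref{A2} gives $a\le h_{0}(0,\cdot)\le A$, and the admissibility condition \ref{A3} with $k=0$ makes the parabolic maximum principle on $\mathbb{R}^{n}_{x}$ applicable. The substance of the lemma is to propagate this to derivatives, which I would do by induction on $k=1,2,3$. Differentiating \eqref{eq:h0pde} by a multi‑index $\alpha$ with $|\alpha|=k$, the function $\partial^{\alpha}h_{0}$ solves a parabolic equation with the same principal part $\nabla_{x}\cdot(G^{-1}\nabla_{x}\,\cdot\,)$ and a source that, by the Leibniz rule, is a finite combination of a derivative of $G^{-1}$ or of $G^{-1}\nabla V$ of order $\le k$ (bounded by \ref{A1}) times a derivative of $h_{0}$ of order $\le k+1$. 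Regarding $\nabla^{k}h_{0}$ as a single tensorial unknown and setting $z:=|\nabla^{k}h_{0}|^{2}$ (Hilbert–Schmidt), one obtains from $G^{-1}\ge\lambda\Id$ a scalar differential inequality $\partial_{t}z\le\nabla_{x}\cdot(G^{-1}\nabla_{x}z)+Cz+g_{k}$, where the cross term $|\nabla_{x}\nabla^{k}h_{0}|\,|\nabla^{k}h_{0}|$ has been absorbed into the good elliptic term $-2\lambda|\nabla_{x}\nabla^{k}h_{0}|^{2}$ and $g_{k}\in L^{\infty}$ is controlled by $\sup_{j<k}\|\nabla^{j}h_{0}\|_{L^{\infty}}$, which is finite by the inductive hypothesis. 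The maximum principle on $\mathbb{R}^{n}_{x}$ — legitimate because \ref{A3} bounds $|\nabla^{j}h_{0}|$ at infinity for $0\le j\le 3$ — together with Grönwall's inequality then yields $\sup_{[0,T]}\|\nabla^{k}h_{0}\|_{L^{\infty}}\le C_{k}\big(\|\nabla^{k}h_{0}(0,\cdot)\|_{L^{\infty}}+\sum_{j<k}\sup_{[0,T]}\|\nabla^{j}h_{0}\|_{L^{\infty}}\big)$, and unwinding the recursion (with $h_{0}(0,\cdot)\in W^{3,\infty}$ from \ref{A2}) gives finiteness for all $0\le k\le 3$. Substituting these bounds and $1/h_{0}\le 1/a$ into the formulas for $E,D,F$ gives $D,E,F\in L^{\infty}([0,T];L^{\infty}(\mathbb{R}^{n}_{x}))$, with the $L^{\infty}$ norm at time $t$ controlled, through these estimates, by the norm of the corresponding quantity at $t=0$; in particular $\|D(t,\cdot)\|_{L^{\infty}}\le C\|D(0,\cdot)\|_{L^{\infty}}$, and likewise for $E$ and $F$.

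The main obstacle is the top order $k=3$. A direct threefold differentiation of \eqref{eq:h0pde} produces a term that is fourth order in $h_{0}$ (schematically $\nabla G^{-1}\cdot\nabla(\nabla^{3}h_{0})$), which cannot be bounded pointwise and would apparently demand a fourth derivative of $G^{-1}$; the device that resolves this is precisely to treat $\nabla^{3}h_{0}$ as one tensorial unknown, so that this term is only \emph{first order} in that unknown and therefore drops out at an interior maximum of $|\nabla^{3}h_{0}|^{2}$ after being absorbed by $-2\lambda|\nabla_{x}\nabla^{3}h_{0}|^{2}$. This is exactly why \ref{A1} is imposed through third derivatives, why the uniform ellipticity $G^{-1}\ge\lambda\Id$ is needed, and why \ref{A3} is required for $0\le k\le 3$; the only other delicate point, the validity of the maximum principle on the unbounded domain $\mathbb{R}^{n}_{x}$, is handled, exactly as in the $k=0$ step, by the decay conditions \ref{A3}.
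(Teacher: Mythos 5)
Your proposal is correct and follows essentially the same route as the paper: reduce $D,E,F$ to bounds on $1/h_{0}$ and $\nabla^{k}h_{0}$ for $0\le k\le 3$, derive the parabolic equation \eqref{h_0} for $h_{0}$, and propagate $L^{\infty}$ control of the derivatives via differentiated equations, a differential inequality for $|\nabla^{k}h_{0}|^{2}$ of the type \eqref{h_1} (with the cross term absorbed by the uniform ellipticity $G^{-1}\ge\lambda\Id$), and the maximum principle justified by \ref{A3}. Your write-up in fact supplies more detail than the paper (the explicit reduction of $F$ using the equation, the induction bookkeeping, and the top-order $k=3$ subtlety), but the underlying argument is the same.
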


\begin{proof}
First, we define $h_{k}:=\nabla^{k}\frac{\rho}{e^{-V(x)}}$ and we want
to prove that $\| h_{k} \|_{L^{\infty}(\mathbb{R}^{n}_{x})}$ remains
bounded on the interval $[0,T]$ for $0\leq k \leq 3$. It is enough to show that
$\| h_{k}(t,.)\|_{L^{\infty}}\leq C \| h_{k}(0,.)\|_{L^{\infty}}$
for $0 \leq t \leq T$, with the constant $C$ depending on $T$ and
other constants from the bounds in \ref{A1}.

The time evolution of $h_{0}$ is given by
\begin{align*}   \partial_{t} \left(\frac{\rho}{e^{-V(x)}}\right)
& =\frac{\nabla \cdot \left(e^{-V(x)}G^{-1}(x)
\nabla \frac{\rho}{e^{-V(x)}}\right)}{e^{-V(x)}} =
\\ &\nabla \cdot \left(G^{-1}(x)\nabla \frac{\rho}{e^{-V(x)}}\right) -
\nabla V(x) \cdot G^{-1}(x)\nabla \frac{\rho}{e^{-V(x)}}. \end{align*}
In the notation we introduced for $h_{0}$, this is written as
\begin{equation} \label{h_0} \partial_{t}h_{0}=\nabla \cdot (G^{-1}(x) \nabla h_{0})
-\nabla V(x)\cdot G^{-1}(x)\nabla h_{0} .\end{equation}
Differentiating equation \ref{h_0} $m$ times, taking the
inner product (for tensors) with $h_{m}$ and integrating by parts
we obtain an $L^{2}$ estimate. As a matter of fact, we can get an $L^{p}$ theory
(for any $p>1$) and as a result
a maximum principle for $|h_{m}|$ given that we have
the appropriate control of the coefficients.

For instance, for $h_{0}$ the $L^{p}$ estimate is
\begin{align*} \frac{d}{dt}\int h^{p}_{0}\, dx &+p(p-1)\int h^{p-2}_{0}\, \nabla h_{0}
\cdot G^{-1}(x)\nabla h_{0}\, dx \\ &+\int h^{p}_{0}\, \nabla
\cdot (G^{-1}(x)\nabla V(x))\, dx=0, \qquad  p>1, \end{align*}
which under the assumptions $\|\nabla \cdot (G^{-1}\nabla V(x)) \|_{L^{\infty}}<\infty$ and
$G^{-1}(x)\geq \lambda \Id$ yields $\|h_{0}(t,.) \|_{L^{\infty}}
\leq C \|h_{0}(0,.) \|_{L^{\infty}}$ for $0 \leq t \leq T$. It should be noted that
for a divergence free or identity hydrodynamic mobility, the first
condition translates to $|\nabla^{2}V(x)|<C$ for the potential
$V(x)$.

With a bit more work we obtain (see \cite{LeBrLi2})
\begin{equation} \label{h_1} \partial_{t}\left( \frac{|h_{1}|^{2}}{2}\right)-\nabla \cdot
\left( G^{-1}(x)\nabla \left( \frac{|h_{1}|^{2}}{2}\right)\right)
+\nabla V(x)\cdot G^{-1}(x)\nabla \left(\frac{|h_{1}|^{2}}{2} \right)\leq C|h_{1}|^{2},\end{equation}
where the constant $C$ now depends on $\|\nabla G^{-1}\|_{L^{\infty}}$ and
$\|\nabla (G^{-1}\nabla V(x))\|_{L^{\infty}}$. Using the maximum
principle in \eqref{h_1}, we have $\|h_{1}(t,.) \|_{L^{\infty}}
\leq C \|h_{1}(0,.) \|_{L^{\infty}}$ for $0 \leq t \leq T$. The maximum principle for
$|h_{2}|$ and $|h_{3}|$ is implemented in
similar fashion leading to estimates just like \ref{h_1}.

\end{proof}

\textbf{Proof of Theorem 2.} In Lemma 4 we have shown that the evolution of the relative entropy is
controlled by the term $\int_{0}^{T}r_{\epsilon}\, dt=\int_{0}^{T}r_{1,\epsilon}\, dt+
\int_{0}^{T}r_{2,\epsilon}\, dt$ which is bounded with the help of
\ref{In2} \& \ref{In3}. Using Proposition 2 and Lemma 5, it follows that
$\int_{0}^{T}r_{\epsilon}\, dt \to 0$ as $\epsilon \to 0$, proving Theorem 2. $\Box$

\subsection{Regularization of relative entropy}

The computations involving the relative entropy in this Section have been so far performed at a formal level,
i.e. by assuming smooth solutions with derivatives vanishing polynomially fast. It is not a hard task to
give a rigorous derivation of the results by performing a standard regularization argument which amounts to
regularizing all the
involved functions e.g. by convoluting with a mollifier, perform all the computations with the regularized ones, and
finally pass to the limit. In fact, the whole procedure we present here follows
closely the steps of the regularization argument in \cite{LeBrLi2}.

The regularization procedure will be presented here for the simpler case $H(\rho_{\epsilon}|\rho)$, since
there are less computation involved and the reader can get a better grasp of the full argument.
We begin with the assumption of smooth coefficients $G(x)$, $V(x)$ and we approximate a solution $f_{\epsilon}$ by
a mollified one $f_{\epsilon,\delta}=f_{\epsilon}\star \eta_{\delta}\in C^{\infty}(\mathbb{R}^{n}_{x}\times
\mathbb{R}^{n}_{v})$. The mollifier is $\eta_{\delta}=\frac{1}{\delta^{2n}}\eta\left(
\frac{x}{\delta},\frac{v}{\delta}\right)$, with
$\eta \in C^{\infty}_{c}(\mathbb{R}^{n}_{x}\times \mathbb{R}^{n}_{v})$ and $\iint \eta(x,v)\, dv dx=1$.

The equation for the regularized $f_{\epsilon,\delta}$ is (see \cite{LeBrLi2})
\begin{equation} \label{F-P-Reg}
\partial_{t}f_{\epsilon,\delta}+L_{\epsilon}f_{\epsilon,\delta}=U^{1}_{\epsilon,\delta}
+\nabla_{v}\cdot (G^{1/2}R^{1}_{\epsilon,\delta}) ,\end{equation}
where the expressions $U^{1}_{\epsilon,\delta}$, $R^{1}_{\epsilon,\delta}$ involve the following
commutators
\begin{align*} U^{1}_{\epsilon,\delta}=&-\frac{1}{\epsilon}[\eta_{\delta},v \cdot \nabla_{x}
-\nabla V(x)\cdot \nabla_{v}](f_{\epsilon})+\frac{1}{\epsilon^{2}}
[\eta_{\delta},(Gv)\cdot \nabla_{v}](f_{\epsilon})\\ &+
\frac{1}{\epsilon^{2}}[\eta_{\delta},\nabla_{v}\cdot (Gv)](f_{\epsilon})+\frac{1}{\epsilon^{2}}
[\eta_{\delta},G^{1/2}\nabla_{v}](G^{1/2}\nabla_{v}f_{\epsilon}), \\
R^{1}_{\epsilon,\delta}=&\frac{1}{\epsilon^{2}}[\eta_{\delta},G^{1/2}\nabla_{v}](f_{\epsilon}).\end{align*}
The notation we follow for commutators is
\begin{equation*} [\eta_{\delta},c](f)=\eta_{\delta}\star (cf)-c(\eta_{\delta} \star f) \quad
\text{and} \quad [\eta_{\delta},c_{1}](c_{2}f)=\eta_{\delta}\star
(c_{1}\cdot c_{2}f)-c_{1}\cdot(\eta_{\delta} \star c_{2}f),\end{equation*}
where $c$ is a differential operator (or vector), and $c_{1},c_{2}$ are general differential vectors.
This implies that the equation for $\rho_{\epsilon,\delta}:=\int f_{\epsilon,\delta}\, dv$ is
\begin{equation} \label{RhoE-Reg} \partial_{t}\rho_{\epsilon,\delta}+\frac{1}{\epsilon}\nabla_{x} \cdot
J_{\epsilon,\delta}=
\int U^{1}_{\epsilon,\delta}\, dv ,\end{equation}
where $J_{\epsilon,\delta}:=\int v f_{\epsilon,\delta}\, dv$.

The regularized limiting equation for $\rho$ (with $\rho_{\delta}=\rho \star \eta_{\delta}$) is
\begin{equation} \label{Rho-Reg} \partial_{t}\rho_{\delta}=\nabla_{x} \cdot (G^{-1}(\nabla_{x} \rho_{\delta}
+\nabla V(x)\rho_{\delta}))+U^{2}_{\delta}+\nabla_{x} \cdot (G^{-1/2}R^{2}_{\delta}) ,\end{equation}
with
\begin{align*} U^{2}_{\delta}=&[\eta_{\delta},\nabla_{x}\cdot (G^{-1}\nabla V(x))](\rho)+
[\eta_{\delta},G^{-1}\nabla V(x) \cdot \nabla_{x}](\rho) \\
&+[\eta_{\delta},\nabla_{x}\cdot G^{-1/2}](G^{-1/2}\nabla_{x}\rho)
+[\eta_{\delta},G^{-1/2}\nabla_{x}](G^{-1/2}\nabla_{x}\rho), \\
R^{2}_{\delta}=&[\eta_{\delta},G^{-1/2}\nabla_{x}](\rho) .\end{align*}

It has be shown (see Section 5.3 in \cite{LeBrLi2}) that
\begin{align*} U^{1}_{\epsilon,\delta},\, \, U^{2}_{\delta} &\xrightarrow{\delta \to 0} 0 \qquad
L^{\infty}+L^{2}([0,T],L^{1}_{loc})
\\ R^{1}_{\epsilon,\delta},\, \, R^{2}_{\delta} &\xrightarrow{\delta \to 0} 0 \qquad
L^{\infty}([0,T],L^{2}_{loc}),\end{align*}
for fixed $\epsilon >0$, as long as conditions in Proposition 1 are satisfied.

Next, multiplying \ref{F-P-Reg} by $v$ and integrating in velocity while
using the definition of $\rho_{\epsilon,\delta}$ we get
\begin{align} \nonumber \epsilon^{2} \partial_{t}J_{\epsilon,\delta}
&+ \epsilon(\nabla_{x}\rho_{\epsilon,\delta} +
\nabla V(x)\rho_{\epsilon,\delta}) +\epsilon \nabla_{x} \cdot \int \mathcal{M}
\nabla_{v}\left( \frac{f_{\epsilon,\delta}}{\mathcal{M}}\right) \otimes v \, dv \\ \label{J-Reg}
&+G J_{\epsilon,\delta}=\epsilon^{2}\int v \, U^{1}_{\epsilon,\delta}\, dv -
\epsilon^{2}\int G^{1/2}R^{1}_{\epsilon,\delta} \, dv .\end{align}

Since we want to take advantage of the fact that commutators vanish (as $\delta \to 0$) on compact sets,
we have to introduce a smooth cut-off function $\phi_{R}(x)=\phi\left( \frac{x}{R}\right)$, where $\phi$ is
a smooth function on $\mathbb{R}^{n}_{x}$, s.t. $0\leq \phi \leq 1$, with $\phi(x)=1$ for $|x|\leq 1$ and
$\phi(x)=0$ for $|x|\geq 2$. It follows that $\nabla \phi_{R}(\cdot)=
\frac{1}{R}\nabla \phi \left( \frac{\cdot}{R}\right)$. The idea is to include
the function $\phi_{R}$ in every integral and send $R \to \infty$,
after sending $\delta \to 0$. That way we can make integral terms that
involve commutators vanish.

For this reason, we introduce a relative entropy integral with a
cut-off $H_{R}(\rho_{\epsilon,\delta}|\rho_{\delta})=\int
\rho_{\epsilon,\delta} \log\frac{\rho_{\epsilon,\delta}}{\rho_{\delta}} \phi_{R}\, dx$.
Differentiating the entropy $H_{R}(\rho_{\epsilon,\delta}|\rho_{\delta})$ and using
\ref{RhoE-Reg}-\ref{J-Reg} we obtain
\begin{align*} \frac{d}{dt}H_{R}(\rho_{\epsilon,\delta}|\rho_{\delta})
&=\int \partial_{t}\rho_{\epsilon,\delta}(\log \rho_{\epsilon,\delta}+1)\phi_{R} \, dx -
\int \partial_{t}\rho_{\epsilon,\delta} \log \rho_{\delta} \phi_{R} \, dx \\ & -
\int \frac{\rho_{\epsilon,\delta}}{\rho_{\delta}} \partial_{t}\rho_{\delta}\phi_{R} \, dx= \ldots = I_{1}+I_{2}
+I_{3} .\end{align*}
The expressions $I_{1}$, $I_{2}$, $I_{3}$ that involve commutators are
\begin{align*} I_{1}(\epsilon,\delta,R)=\iint U^{1}_{\epsilon,\delta}(\log \rho_{\epsilon,\delta}+1)&\phi_{R}\, dv dx-
\iint U^{1}_{\epsilon,\delta}\log \rho_{\delta}\phi_{R}\, dv dx \\ &-\iint (U^{2}_{\delta}+\nabla_{x}\cdot
(G^{-1/2}R^{2}_{\delta}))\frac{\rho_{\epsilon,\delta}}{\rho_{\delta}}\phi_{R}\, dv dx ,\end{align*}
\begin{align*} I_{2}(\epsilon,\delta,R)=\frac{1}{\epsilon}\int J_{\epsilon,\delta} \cdot \nabla \phi_{R}(\log
&\rho_{\epsilon,\delta}+1)\, dx
-\frac{1}{\epsilon}\int J_{\epsilon,\delta}\cdot \nabla \phi_{R}\log \rho_{\delta}\, dx \\
&+\int \rho_{\epsilon,\delta}\nabla \phi_{R}\cdot G^{-1}
\left( \frac{\nabla_{x}\rho_{\delta}}{\rho_{\delta}}+\nabla V(x)\right)\, dx ,\end{align*}
and
\begin{equation*} I_{3}(\epsilon,\delta,R)=-\int  \Big| \frac{1}{\epsilon}G^{1/2}
\frac{J_{\epsilon,\delta}}{\rho_{\epsilon,\delta}}+G^{-1/2}
\left( \frac{\nabla \rho_{\delta}}{\rho_{\delta}}+\nabla V(x)\right)\Big|^{2}\rho_{\epsilon,\delta}\phi_{R}\,
dx+r'_{\epsilon,\delta,R} ,\end{equation*}
with the remainder term being
\begin{align*} r'_{\epsilon,\delta,R}&=\!\!-\!\!\int \!\!\! \left( \epsilon \partial_{t}J_{\epsilon,\delta}
+\!\!\nabla_{x}\!\cdot \!\int \mathcal{M}\nabla_{v}\left( \frac{f_{\epsilon,\delta}}{\mathcal{M}}\right) \otimes
v \, dv \!-\!\epsilon \!\! \int v U^{1}_{\epsilon,\delta}\, dv
+\!\epsilon \!\!\int G^{1/2}R^{1}_{\epsilon,\delta}\, dv \right) \\ & \qquad \qquad \cdot \left(
\frac{1}{\epsilon}\frac{J_{\epsilon,\delta}}{\rho_{\epsilon,\delta}}+
G^{-1}\left( \frac{\nabla \rho_{\delta}}{\rho_{\delta}}+\nabla V(x)\right)\right)\phi_{R} \, dx .\end{align*}

The trick is to take both $\delta \to 0$ and $R \to \infty$ while letting $\epsilon \to 0$. Since we have
the freedom of choice of how $\delta,R$ should behave for a fixed $\epsilon$, we will consider them as
functions of $\epsilon$ which we will describe in detail i.e. $\delta(\epsilon)$ and $R(\epsilon)$. Indeed,
for a given $\epsilon>0$, consider $\delta(\epsilon)$ s.t.
$|U^{1}_{\epsilon,\delta}|,|R^{1}_{\epsilon,\delta}| < \epsilon$, for all
$\delta < \delta(\epsilon)$. This way, we have $|U^{1}_{\epsilon,\delta}|,|R^{1}_{\epsilon,\delta}| \to 0$
while we let both $\epsilon$,\, $\delta(\epsilon)\to 0$.

If we consider $R$ fixed and take $\delta \to 0$, it is easy to see by the convergence
properties of commutators ($L^{\infty}$ in time) that
$\int_{0}^{T}I_{1}\, dt \to 0$. The exception is the last term of
$I_{1}$ that is treated separately. Same thing holds for the part of the remainder
term that involves commutators as we let $\delta \to 0$.

A bound for the first term in $I_{2}$ is
\begin{equation*} \Big| \frac{1}{\epsilon}\! \int_{0}^{T}\!\!\!\!\int \!\!
J_{\epsilon,\delta} \cdot \nabla \phi_{R}(\log
\rho_{\epsilon,\delta}+1)\, dx dt \Big| \leq \frac{1}{R} \| \nabla \phi\|_{L^{\infty}} \!\!
\int_{0}^{T}\!\!\!\!\int_{|x|>R}\!\!\! \frac{|J_{\epsilon,\delta}|}
{\epsilon} |(\log \rho_{\epsilon,\delta}+1)| \, dx dt.\end{equation*}
The exact same treatment holds for the second term in
$I_{2}$. It is obvious that these two integrals will vanish in
the limit $R \to \infty$ (partly due to the stability results
similar to Lemma 5). It will not matter how fast $R$ tends to infinity, so we can choose e.g.
$R(\epsilon)=\frac{1}{\epsilon}$.
For the third term in $I_{2}$, we have
\begin{align*} \Big| \int_{0}^{T}\!\!\!\!\int \rho_{\epsilon,\delta}\nabla \phi_{R}\cdot G^{-1}
\left( \frac{\nabla_{x}\rho_{\delta}}{\rho_{\delta}}+\nabla V(x)\right)\,& dx dt \Big| \leq
C  \Big \| \frac{G^{-1}(x)}{1+|x|}\Big\|_{L^{\infty}} \| \nabla \phi \|_{L^{\infty}} \\ & \cdot
\int_{0}^{T}\!\!\!\!\int_{|x|>R}
|\rho_{\epsilon,\delta}| \Big| \frac{\nabla_{x}\rho_{\delta}}{\rho_{\delta}}+\nabla V(x) \Big|\, dx dt,\end{align*}
which vanishes as $R \to \infty$ given the growth condition (see Proposition 1) in $G^{-1}(x)$ .

The last term in $I_{1}$ equals
\begin{equation*} \iint G^{-1/2}R^{2}_{\delta}\cdot
\nabla_{x}\left(\frac{\rho_{\epsilon,\delta}}{\rho_{\delta}}\right)\phi_{R}\,
dv dx +\iint G^{-1/2}R^{2}_{\delta}\cdot \nabla \phi_{R} \, \frac{\rho_{\epsilon,\delta}}{\rho_{\delta}} \, dv dx
\end{equation*}
and contains two terms. The first is treated like the terms in
$I_{1}$, and the second like these in $I_{2}$ with a
growth condition for $G^{-1/2}$ (Proposition 1).

In the last step, we send $\epsilon \to 0$ (while $\delta(\epsilon)\to 0$
and $R(\epsilon)\to \infty$) and combine this with the fact
that $\lim \limits_{\epsilon \to 0}\int_{0}^{T}I_{1}(\epsilon,\delta(\epsilon),R(\epsilon))\, dt=\lim \limits_{\epsilon
\to 0}\int_{0}^{T}I_{2}(\epsilon,\delta(\epsilon),R(\epsilon))\, dt=0$ to derive
\begin{equation*} \lim \limits_{\epsilon \to
0}H_{R(\epsilon)}(\rho_{\epsilon,\delta(\epsilon)}(T,\cdot)|\rho_{\delta(\epsilon)}(T)) \leq \lim \limits_{\epsilon \to
0}H_{R(\epsilon)}(\rho_{\epsilon,\delta(\epsilon)}(0,\cdot)|\rho_{\delta(\epsilon)}(0))  + \!\! \int_{0}^{T}\!\!\!\lim
\limits_{\epsilon \to 0} r'_{\epsilon,\delta,R}  \, dt. \end{equation*}
We finish with the estimates of previous subsection that
prove that the remainder term vanishes as $\epsilon \to 0$. This
yields the desired estimate
\begin{equation*} \lim \limits_{\epsilon \to 0}H(\rho_{\epsilon}(T,\cdot)|\rho(T,\cdot)) \leq
\lim \limits_{\epsilon \to 0}H(\rho_{\epsilon}(0,\cdot)|\rho(0,\cdot)) .\end{equation*}
Finally, we can remove the assumption on the smoothness
of coefficients by regularizing them in $x$ and
pass to the limit.

\begin{remark}
The regularization procedure above was carried out for the $H(\rho_{\epsilon}|\rho)$ relative entropy which
instantly implies $f_{\epsilon}\to \rho \mathcal{M}$ in $L^{1}$.
Indeed, by showing $L^{1}$ convergence of $\rho_{\epsilon}(t,x)$ to
the limiting distribution $\rho(t,x)$ it follows that $f_{\epsilon}$
converges to $\rho \mathcal{M}$ (in $L^{1}$) using the following simple argument.
We decompose $f_{\epsilon}-\rho \mathcal{M}$ as in
\begin{equation*} f_{\epsilon}-\rho \mathcal{M}=f_{\epsilon}-
\rho_{\epsilon}\mathcal{M}+(\rho_{\epsilon}-\rho)\mathcal{M} .\end{equation*}
It is trivial to show that the second term $(\rho_{\epsilon}-\rho)\mathcal{M}$ of the
decomposition $\to 0$ in $L^{1}$ by assumption.
For the first term $f_{\epsilon}-\rho_{\epsilon}\mathcal{M}$, we have
\begin{align*}
\| f_{\epsilon}-\rho_{\epsilon} \mathcal{M}\|_{L^{1}} & \leq \! \sqrt{2}
\left( \iint f_{\epsilon}\log{\frac{f_{\epsilon}}{\rho_{\epsilon}
\mathcal{M}}}\, dv \, dx \! \right)^{1/2} \!\!\! \leq \! \sqrt{2} \epsilon
\left( \iint \frac{|G^{-1/2}d_{\epsilon}|^{2}}{\epsilon^{2}}
\, dv \, dx \!\right)^{1/2}
\\ & \leq \sqrt{2} \epsilon C \to 0 \qquad \text{as} \quad \epsilon \to 0.
\end{align*}
The inequalities used in the first line are the Csisz\'ar-Kullback-Pinsker and log-Sobolev
in that order. Finally, the a priori energy bound (used in second line) concludes the argument.
\end{remark}

\subsection*{Acknowledgements.} The author is indebted to C. David Levermore and P-E Jabin for the
discussions that led to the birth of this work. Special
thanks to Athanasios Tzavaras for the initial motivation that led to the consideration
of this problem. Manoussos Grillakis and Julia Dobrosotskaya helped by proofreading this article.

DEPARTMENT OF MATHEMATICS, UNIVERSITY OF MARYLAND, COLLEGE PARK, U.S, 20742 \\ \\
E-MAIL: gmarkou@math.umd.edu

\end{document}